\newcommand{\ga}{\alpha}
\newcommand{\gk}{\kappa}
\newcommand{\gn}{\nu}
\newcommand{\gt}{\tau}
\newcommand{\func }{\mathord{:}}
\newcommand{\restricted}{\mathord{\restriction}}
\newcommand{\ordered}[1]{\ensuremath{\langle #1 \rangle}}
\newcommand{\ordof}[2]{\ensuremath{\ordered{ #1 \mid #2 }}}
\DeclareMathOperator{\len}{l}
\DeclareMathOperator{\crit}{crit}
\DeclareMathOperator{\dom}{dom}
\DeclareMathOperator{\Col}{Col}
\DeclareMathOperator{\Add}{Add}
\DeclareMathOperator{\C}{C}
\DeclareMathOperator{\supp}{supp}
\DeclareMathOperator{\Ult}{Ult}
\DeclareMathOperator{\limdir}{lim\ dir}
\DeclareMathOperator{\id}{id}
\DeclareMathOperator{\mc}{mc}
\newcommand{\Es}{{\ensuremath{\bar{E}}\/}}
\newcommand{\VS}{V^*}
\newcommand{\MS}{{M^*}}
\newcommand{\NS}{{N^*}}
\newcommand{\NSE}{{N^{*\Es}}}
\newcommand{\MSt}{{M^*_\gt}}
\newcommand{\Mt}{{M_\gt}}
\newcommand{\MStE}{{M^{* \Es}_\gt}}
\newcommand{\ME}{{M_{\Es}}}
\newcommand{\MSE}{{M^*_{\Es}}}
\def\k{\kappa}
\def\l{\lambda}
\def\a{\alpha}
\def\b{\beta}
\newtheorem{theorem}{Theorem}[section]
\newtheorem{lemma}[theorem]{Lemma}
\newtheorem{definition}[theorem]{Definition}
\newtheorem{remark}[theorem]{Remark}
\newtheorem{claim}[theorem]{Claim}
\numberwithin{equation}{section}
\def\l{\lambda}
\def\rmark{\mbox{$\rm\bf\rule{0.06em}{1.45ex}\kern-0.05em R$}}
\def\pmark{\mbox{$\rm\bf\rule{0.06em}{1.45ex}\kern-0.05em P$}}
\def\nmark{\mbox{$\rm\bf\rule{0.06em}{1.45ex}\kern-0.05em N$}}
\def\vdash{\mbox{$\rm\| \kern-0.13em -$}}
\begin{document}

\title[Killing GCH everywhere by a cofinality-preserving forcing notion]{Killing GCH everywhere by a cofinality-preserving forcing notion over a model of $GCH$ }

\author[Sy D. Friedman and M. Golshani]{Sy-David Friedman and Mohammad
  Golshani}

\thanks{The first author would like to thank the FWF (Austrian Science Fund) for
its support through Project P23316-N13.}

\thanks{The second author's research was in part supported by a grant from IPM (No. 91030417).}

\thanks{Both authors would like to thank Carmi Merimovich for his
reading of the manuscript and for his helpful suggestions. They also thank the referee of the paper for his useful comments and remarks.}
\maketitle




\begin{abstract}
Starting from large cardinals we construct a pair $V_1\subseteq V_2$ of models of $ZFC$ with the same cardinals and cofinalities such that $GCH$ holds in $V_1$ and fails everywhere in $V_2$.

\end{abstract}
\maketitle

\section{Introduction}

Easton's classical result showed that over any model of $GCH,$ one can
force any reasonable pattern of the power function $\lambda\mapsto
2^\lambda$ on the regular cardinals $\lambda$, preserving cardinals
and cofinalities. Subsequently, much work has been done on the
singular cardinal problem, whose aim is to characterize the patterns
of the power function on all cardinals, including the singular ones.
Typically in this work, large cardinals are used to obtain patterns of
power function behavior at singular cardinals after applying subtle
forcings which change cofinalities or even collapse cardinals.
This leads one to ask: Is it possible to obtain a failure of $GCH$
everywhere by forcing over a model of $GCH$ without changing
cofinalities? If so, can one have a fixed finite gap in the resulting
model, meaning that $2^\lambda=\lambda^{+n}$ for some finite $n>1$ for
all $\lambda$?

In this paper we prove the following theorem.
\begin{theorem}
Assume $GCH+$there exists a $(\kappa+4)-$strong cardinal $\kappa$. Then there is a pair $V_1\subseteq V_2$ of models of $ZFC$ such that:

$(a)$ $V_1$ and $V_2$ have the same cardinals and cofinalities,

$(b)$ $GCH$ holds in $V_1$,

$(c)$ $V_{2} \models `` \forall \lambda, 2^{\lambda} = \lambda^{+3}$''.
\end{theorem}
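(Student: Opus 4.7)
The plan is to build $V_2$ as a forcing extension of an intermediate model $V_1$ of GCH, using a single cofinality-preserving forcing $\mathbb{P}$. Starting from the initial ground model $W$ with a $(\kappa+4)$-strong cardinal $\kappa$, the first step is a reverse Easton preparation iteration up to $\kappa$ that produces $V_1$ in which GCH is preserved, $\kappa$ remains $(\kappa+4)$-strong, and a coherent sequence of extenders is installed on a class of smaller cardinals inside $V_\kappa$. The purpose of this preparation is to supply, at every relevant stage of the main forcing, the $(\nu+4)$-strong embedding needed to plug in the Prikry-type iterand there. Since $\kappa$ is $(\kappa+4)$-strong, standard reflection yields unboundedly many cardinals $\nu<\kappa$ with analogous strength, so one can arrange the preparation to equip such a class with the requisite extender data.

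The main forcing $\mathbb{P}$ over $V_1$ is a reverse Easton iteration through the cardinals up to $\kappa$ combining two kinds of ingredients. At each regular $\lambda$ where the continuum is to be increased, the iterand is the standard Cohen forcing $\mathrm{Add}(\lambda,\lambda^{+3})$, which is $\lambda$-closed and $\lambda^{+}$-cc and hence preserves cardinals and cofinalities. At each stage $\nu$ corresponding to a cardinal equipped by the preparation with the required extender, the iterand is a Merimovich-style extender-based Prikry forcing $\mathbb{P}_\nu$ built from that extender; $\mathbb{P}_\nu$ singularizes $\nu$ to cofinality $\omega$, adds $\nu^{+3}$ new subsets of $\nu$, and by the Prikry property together with an appropriate chain condition preserves all cardinals and cofinalities apart from $\cf(\nu)$. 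Iterating through $\kappa$ and reading off $V_\kappa$ in both the ground model and the extension produces the advertised pair $V_1\subseteq V_2$, with $V_2\models 2^\lambda=\lambda^{+3}$ for every $\lambda$ and the same cardinals and cofinalities as $V_1$.

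The main obstacle is controlling the cardinal arithmetic at the singularized stages. At each such $\nu$ one must add exactly $\nu^{+3}$ subsets of $\nu$ while preserving $\nu^{+}$, $\nu^{++}$, and $\nu^{+3}$; the $(\nu+4)$-strength is calibrated precisely to this task, because the extender sequence supporting $\mathbb{P}_\nu$ has length $\nu^{+3}$ and must live inside the target of the ultrapower, which requires $V_{\nu+4}$ to embed into that target. Any weaker hypothesis would fail to provide an extender sequence of the required length. A second, more delicate obstacle is showing that the combined iteration of Cohen forcings and extender-based Prikry forcings enjoys a uniform Prikry-type property so that the global iteration is cofinality-preserving; this calls for a master-condition analysis coordinating the Easton stages with the Prikry stages, along with a verification that $\mathbb{P}$ admits no new long antichains at any cardinal. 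Once both are in place, the equality $2^\lambda=\lambda^{+3}$ reduces to a direct counting of nice names and the gap $+3$ reflects the choice of $\mathrm{Add}(\lambda,\lambda^{+3})$ and extender length $\nu^{+3}$.
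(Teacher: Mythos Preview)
Your proposal contains a direct internal contradiction. You state that each $\mathbb{P}_\nu$ ``singularizes $\nu$ to cofinality $\omega$'' and ``preserves all cardinals and cofinalities apart from $\cf(\nu)$,'' yet you also claim that the pair $(V_1,V_2)$ has the same cofinalities. These cannot both hold: in your $V_1$ the cardinals $\nu$ carrying extenders are measurable, hence regular, while in your $V_2$ they have cofinality $\omega$. Moreover, your iteration does nothing special at cardinals like $\aleph_\omega$ that are already singular in $V_1$; a reverse Easton iteration of Cohen forcings cannot force $2^{\aleph_\omega}=\aleph_{\omega+3}$ over a model of GCH while preserving cofinalities, since SCH will continue to hold there. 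So the scheme ``$V_1$ is the ground model and $V_2$ is a cofinality-preserving extension'' cannot produce the conclusion.

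The paper's architecture avoids this trap by \emph{not} making $V_2$ a cofinality-preserving extension of $V_1$. Both models are forcing extensions of a common ground $V$: one forces with Merimovich's extender-based Radin forcing $\mathbb{P}_{\bar E}$ with interleaved collapses and Cohen pieces (guided by generics $I_\tau$) to obtain $V_2$, and with the projected ordinary Radin forcing $\mathbb{R}_{\bar E_\kappa}$ carrying only the interleaved collapses to obtain $V_1$. A projection $\pi:\mathbb{P}_{\bar E}\to\mathbb{R}_{\bar E_\kappa}$ guarantees $V_1\subseteq V_2$, and, crucially, both forcings add the \emph{same} Radin club $C\subseteq\kappa$. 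The cardinal and cofinality structure below $\kappa$ in each model is then read off from $C$ (limit points of $C$ together with the first six successors of each club point), so $V_1$ and $V_2$ agree on cardinals and cofinalities with each other even though neither agrees with $V$; the cofinality pattern depends only on the length of the extender sequence, which is common to both. The preparation step is a mild product of Cohen forcings at $\kappa^+,\kappa^{++},\kappa^{+3}$ (above $\kappa$, preserving GCH below $\kappa$) whose sole purpose is to manufacture the guiding generics; there is no reverse Easton iteration below $\kappa$ and no iteration in the main step at all.
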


\begin{remark}
In fact it suffices to have a Mitchell increasing sequence of extenders of length $\kappa^{+}$, each of them $(\kappa+3)-$strong.
Thus the exact strength that we need for a fixed gap of 3 is a cardinal $\kappa$ with
$o(\kappa)=\kappa^{+3}+\kappa^{+}$. It is also easy to extend our
result to an arbitrary finite gap $n$ instead of $3.$
Then what we  need is a cardinal $\kappa$ with
$o(\kappa)=\kappa^{+n}+\kappa^{+}.$ We focus on the case $n=3$ as it
is typical of all cases $n\geq 3$ (the case $n=2$ is easier).
\end{remark}

The rest of this paper is devoted to the proof of this theorem. The
proof is based on the extender-based Radin forcing developed by
C. Merimovich in the papers [3], [4]. We try to make the proof
self-contained, thus we start with some preliminaries and facts from
these papers, suitably modified for our purposes.

We now summarise the modifications of [4] which are necessary to
achieve our result. In [4], one begins with a model $V^*$ with a
$(\kappa + 4)$-strong cardinal $\kappa$ and performs a (cofinality-preserving) reverse
Easton preparation, which forces $2^\alpha=\alpha^{+3}$ for
the first three successors of each inaccessible $\leq\kappa$. In
the resulting model $V = V^*[G]$ one can construct suitable
``guiding generics'' for later use, which are in fact generics
over a suitable inner model $M$ of $V$ which blow up the power sets
of the first three successors of $\kappa^{+3}$ and which collapse
the image $i(\kappa)$ of $\kappa$ to $\kappa^{+6}$, where $i:V\to M$
is a suitable elementary embedding. After this preparation, one
performs an extender-based Radin forcing with interleaved collapses,
using the guiding generics obtained through preparation. The result
is a model with gap $3$ everywhere below $\kappa$ (i.e., $2^\alpha=
\alpha^{+3}$ for all $\alpha<\kappa$). By truncating the universe
at $\kappa$, one obtains gap $3$ everywhere.

We would like to use a similar method, however we need to perform
a preparation which preserves the GCH below $\kappa$. Thus our
first step is to obtain a model $V = V^*[G]$ which only forces
$2^\alpha=\alpha^{+3}$ at the first three successors of $\kappa$
and adds no new subsets of $\kappa$. Extra work is now required to
show that in this model suitable guiding generics can be found to
carry out the second step of Merimovich's construction. The result
is again a model $V_2 = V[G][H]$ with gap $3$ everywhere below
$\kappa$ (keeping $\kappa$ inaccessible). We now form a model $V_1$
intermediate between $V[G]$ and $V_2$, essentially obtained by using
the ordinary Radin forcing with interleaved collapses (using the
collapsing part of the guiding generics). The model $V_1$ satisfies
GCH below $\kappa$ but has the same cofinalities below $\kappa$
as the model $V_2$. This is verified using a suitable projection
from Merimovich's extender-based Radin forcing with collapses into
the ordinary Radin forcing with collapses.

We should mention that obtaining models $V_1\subseteq V_2$ with
the same \emph{cardinals} (not the same cofinalities) and with the
GCH holding in $V_1$ but failing everywhere in $V_2$ is an easier
result, as then we only need guiding generics for Cohen forcings, not
for L\'evy colllapses, and the second step of the forcing can consist
of a cardinal-preserving (but of course not cofinality-preserving)
Radin forcing. But to preserve cofinalities or to obtain the gap $3$
behaviour of the power function it
appears that the methods of this paper are needed to handle the
necessary collapses.

\section{Extender Sequences}

Suppose $j: V^{*} \rightarrow M^{*} \supseteq V_{\lambda}^{*},
crit(j)=\kappa.$ Define an extender (with projections)

\begin{center}
$E(0)= \langle \langle E_{\alpha}(0): \alpha \in \emph{A} \rangle,
\langle \pi_{\beta, \alpha}: \beta, \alpha \in \emph{A}, \beta
\geq_{j} \alpha \rangle \rangle$
\end{center}

on $\kappa$ by:

\begin{itemize}
\item $\emph{A}=[\kappa, \lambda),$ \item $\forall \alpha \in \emph{A}, E_{\alpha}(0)$ is the $\kappa-$complete ultrafilter on $\kappa$ defined by

    \begin{center}
    $X \in E_{\alpha}(0) \Leftrightarrow \alpha \in j(X)$
    \end{center}
We write $E_\alpha(0)$ as $U_\alpha$.
    \item $\forall \alpha, \beta \in \emph{A}$
     \begin{center}
     $\beta \geq_{j} \alpha \Leftrightarrow \beta \geq \alpha$ and for some $ f \in$$ ^{\kappa} \kappa,$ $  j(f)(\beta)=\alpha$
     \end{center}
     \item $\beta \geq_{j} \alpha \Rightarrow \pi_{\beta, \alpha}: \kappa \rightarrow \kappa$ is such that $j(\pi_{\beta, \alpha})(\beta)=\alpha$

\end{itemize}

Let's recall the main properties of $E(0)$ (see [2])
\begin{enumerate}
\item $\langle \emph{A}, \leq_j \rangle$ is a $\k^+-$directed partial order,
\item $\forall \a, \k \leq_j \a,$ \item $U_\k$ is a normal measure on
  $\k$, \item $\forall \a, U_\a$ is a $P-$point ultrafilter over $\k,$
  i.e for any $f: \k \rightarrow \k$ there is $X \in U_\a$ such that
  $\forall \nu< \k, |X \cap f^{-1''}(\nu)|< \k,$ \item $\pi_{\b,
    \a}^{-1''}(X) \in U_\b \Leftrightarrow X \in U_\a,$ \item $\forall
  \a, \pi_{\a, \a}=id,$
\item $\forall \gamma \geq_j \b \geq_j \a$
  there is $X \in U_\gamma$ such that $\forall \nu \in X, \pi_{\gamma,
   \a}(\nu)=\pi_{\b, \a}(\pi_{\gamma, \b}(\nu)),
$ \item $\forall
  \gamma \geq_j \a, \b$ where $\a \neq \b$ there is $X \in U_\gamma$ such
  that $\forall \nu \in X, \pi_{\gamma, \a}(\nu) \neq \pi_{\gamma,
    \b}(\nu),$

Moreover the $\pi_{\alpha,\kappa}$'s can be chosen so that:

\item $\forall \b \geq_j \a, \forall \nu< \k, \pi_{\b,
    \k}(\nu)=\pi_{\a, \k}(\pi_{\b, \a}(\nu)),$
\item $\forall \a, \b,
  \forall \nu< \k, \pi_{\a, \k}(\nu)=\pi_{\b, \k}(\nu)$; we denote the
  latter by $\nu^0.$
\end{enumerate}

Now suppose that we have defined the sequence $\langle E(\tau'): \tau' < \tau  \rangle$. If $\langle E(\tau'): \tau' < \tau  \rangle \notin M^*$ we stop the construction and set

\begin{center}
$\forall \alpha \in \emph{A}, \bar{E}_{\alpha}= \langle \alpha, E(0), ..., E(\tau'), ...: \tau'< \tau \rangle$
\end{center}
and call $\bar{E}_{\alpha}$ \emph{an extender sequence of length $\tau$}  $(\len(
\Es_{\alpha})=\tau).$

If $\langle E(\tau'): \tau' < \tau  \rangle \in M^*$  then we define
an extender (with projections)

\begin{center}
$E(\tau)= \langle \langle E_{\langle \alpha, E(\tau'): \tau'< \tau \rangle}(\tau): \alpha \in \emph{A} \rangle, \langle \pi_{\langle \beta, E(\tau'): \tau'< \tau \rangle, \langle \alpha, E(\tau'): \tau'< \tau \rangle}: \beta, \alpha \in \emph{A}, \beta \geq_{j} \alpha \rangle \rangle$
\end{center}
on $V_{\kappa}$ by:

\begin{itemize}
\item $X \in E_{\langle \alpha, E(\tau'): \tau'< \tau \rangle}(\tau) \Leftrightarrow \langle \alpha, E(\tau'): \tau'< \tau \rangle \in j(X),$
\item for $\beta \geq_{j} \alpha$ in $\emph{A}, \pi_{\langle \beta, E(\tau'): \tau'< \tau \rangle, \langle \alpha, E(\tau'): \tau'< \tau \rangle}(\langle \nu, d \rangle)= \langle \pi_{\beta, \alpha}(\nu), d \rangle $
\end{itemize}

Note that $ E_{\langle \alpha, E(\tau'): \tau'< \tau \rangle}(\tau)$ concentrates on pairs of the form $\langle \nu, d \rangle$ where $\nu < \kappa$ and $d$ is an extender sequence. This makes the above definition well-defined.

We let the construction run until it stops due to the extender
sequence not being in $M^*$.

\begin{definition}
\begin{enumerate}
\item $\bar{\mu}$ is an extender sequence if there are $j: V^{*} \rightarrow M^{*}$ and $\bar{\nu}$ such that $\bar{\nu}$ is an extender sequence derived from $j$ as above (i.e $\bar{\nu}=\bar{E_{\alpha}}$ for some $\alpha$) and $\bar{\mu}=\bar{\nu}\upharpoonright \tau$ for some $\tau \leq \len(\bar{\nu}),$

\item $\kappa(\bar{\mu})$ is the ordinal of the beginning of the sequence (i.e $\kappa(\bar{E}_{\alpha})=\alpha$),

\item $\kappa^{0}(\bar{\mu})=(\kappa(\bar{\mu}))^{0}$ (i.e $\kappa^{0}(\bar{E}_{\alpha})= \kappa)$),

\item The sequence $ \langle \bar{\mu_{1}}, ..., \bar{\mu_{n}} \rangle$ of extender sequences is $^{0}-$increasing if $\kappa^{0}(\bar{\mu_1}) < ... < \kappa^{0}(\bar{\mu_n}),$

\item The extender sequence $\bar{\mu}$ is permitted to a $^{0}-$increasing sequence $ \langle \bar{\mu_{1}}, ..., \bar{\mu_{n}} \rangle$ of extender sequences if $\kappa^{0}(\bar{\mu_n})<\kappa^{0}(\bar{\mu}),$

\item Notation: We write $X \in \bar{E}_{\alpha}$ iff $\forall \xi < \len(\bar{E}_{\alpha}), X \in E_{\alpha}(\xi),$

\item $\bar{E}= \langle \bar{E}_{\alpha}: \alpha \in A  \rangle$ is an \emph{extender sequence system} if there is $j: V^{*} \rightarrow M^{*}$ such that each $\bar{E}_{\alpha}$ is derived from $j$ as above and $\forall \alpha, \beta \in A, \len(\bar{E}_{\alpha})= \len(\bar{E}_{\beta}).$ Call this common length, the length of $\bar{E}, \len(\bar{E}),$

\item For an extender sequence $\bar{\mu},$ we use $\bar{E}(\bar{\mu})$ for the extender sequence system containing $\bar{\mu}$ (i.e $\bar{E}(\bar{E}_{\alpha})= \bar{E}$),

\item $\dom(\bar{E})=A$,

\item $\bar{E}_{\beta}  \geq_{\bar{E}} \bar{E}_{\alpha} \Leftrightarrow \beta  \geq_{j} \alpha.$
\end{enumerate}
\end{definition}

\section{Finding generic filters}

Using $GCH$ in $V^*$ we construct  an extender sequence system $\bar{E}= \langle \bar{E}_{\alpha}: \alpha \in \dom\bar{E} \rangle$ where $\dom\bar{E}=[\kappa, \kappa^{+3})$ and $\len(\bar{E})=\kappa^{+}$ such that
the ultrapower $j_{\bar{E}}:V^{*}\rightarrow M_{\bar{E}}^{*}$ (defined
below) contains $V_{\kappa+3}^{*}.$ Suppose that $\bar{E}$ is derived from an elementary embedding $j: V^{*} \rightarrow M^{*}.$   Consider the following elementary embeddings $\forall \tau' < \tau < \len(\bar{E})$
\begin{align*} \label{E-system}
& j_\gt\func  \VS \to \MSt \simeq \Ult(\VS, E(\gt))=
\{j_\tau(f)(\bar E_\alpha \restricted \tau)\mid f\in V^*\},
\notag \\
&  k_\gt(j_\gt(f)(\Es_\ga \restricted \gt))=
        j(f)(\Es_\ga \restricted \gt),
\\
\notag & i_{\gt', \gt}(j_{\gt'}(f)(\Es_\ga \restricted \gt')) =
    j_\gt(f)(\Es_\ga \restricted \gt'),
\\
\notag & \ordered{\MSE,i_{\gt, \Es}} = \limdir \ordered {
        \ordof{\MSt} {\gt < \len(\Es)},
                \ordof{i_{\gt',\gt}} {\gt' \leq \gt < \len(\Es)}
        }.
\end{align*}
We demand that
        $\Es \restricted \gt \in \MSt$ for all $\tau<\len(\bar E)$.

Thus we get the following commutative diagram.

\begin{align*}
\begin{diagram}
\node{\VS}
        \arrow[3]{e,t}{j}
        \arrow{sse,l}{j_{\gt'}}
        \arrow[2]{se,l}{j_\gt}
        \arrow{seee,t,l}{j_{\Es}}
    \node{}
    \node{}
    \node{\MS}
\\
\node{}
    \node{}
    \node{}
        \node{\MSE}
        \arrow{n,r}{k_{\Es}}
\\
\node{}
    \node{M^*_{\gt'}}
        \arrow[2]{ne,t,3}{k_{\gt'}}
        \arrow{nee,t,2}{i_{\gt', \Es}}
        \arrow{e,b}{i_{\gt', \gt}}
    \node{\MSt = \Ult(\VS, E(\gt))}
        \arrow[1]{ne,b}{i_{\gt, \Es}}
        \arrow{nne,b,1}{k_{\gt}}
\end{diagram}
\end{align*}

Note that
\begin{itemize}
 \item the critical point of those elementary embeddings originating in $V^*$ is $\kappa,$  \item the critical point of those elementary embeddings originating in other models is $\kappa^{+4}$ as computed in that model.
\end{itemize}
Thus we get

\begin{align*}
& \crit i_{\gt',\gt} = \crit k_{\gt'} = \crit i_{\gt',\Es} =
         (\gk^{+4})_{M^*_{\gt'}},
\\
& \crit k_{\gt} = \crit(i_{\gt, \Es}) = (\gk^{+4})_{M^*_{\gt}},
\\
& \crit k_\Es = (\gk^{+4})_{\MSE}.
\end{align*}
Each of these models catches $V_{\gk^+3}^{\MS} = \VS_{\gk^+3}$ hence computes
$\gk^{+3}$ to be the same ordinal in all models. The larger $\gt$
is the more resemblance there is between $\MSt$ and $\MS$.
This can be verified by noting that
\begin{center}
 $ \kappa^{+4}_{M_{\tau^{'}}^{*}} < j_{\tau^{'}}(\kappa) < \kappa^{+4}_{M_{\tau}^{*}} < j_{\tau}(\kappa) < \kappa^{+4}_{M_{\bar{E}^{*}}} \leq \kappa^{+4}_{M^{*}} \leq \kappa^{+4}.$
\end{center}
We also factor through the normal ultrafilter to get the following commutative diagram

\begin{align*}
\begin{aligned}
\begin{diagram}
\node{\VS}
        \arrow{e,t}{j_\Es}
        \arrow{se,t}{j_\gt}
        \arrow{s,l}{i_U}
        \node{\MSE}
\\
\node{\NS \simeq \Ult(\VS, U)}
         \arrow{e,b}{i_{U, \gt}}
         \arrow{ne,b}{i_{U, \Es}}
        \node{\MSt}
         \arrow{n,b}{i_{\gt, \Es}}
\end{diagram}
\end{aligned}
\begin{aligned}
\qquad
\begin{split}
& U = E_\gk(0),
\\
& i_U \func  \VS \to \NS \simeq \Ult(\VS, U),
\\
& i_{U, \gt}(i_U(f)(\gk)) = j_\gt(f)(\gk),
\\
& i_{U, \Es}(i_U(f)(\gk)) = j_\Es(f)(\gk).
\end{split}
\end{aligned}
\end{align*}
$\NS$ catches $\VS$ only up to $\VS_{\gk+1}$ and we have
\begin{align*}
\gk^+ < \crit i_{U, \gt} = \crit i_{U, \Es}
    = \gk^{++}_{\NS} < i_U(\gk) < \gk^{++}.
\end{align*}

We now define the forcings for which we will need ``guiding
generics''.

\begin{definition}
Let
\begin{enumerate}
\item $\mathbb{R}_{U}^{\Col}=\Col(\kappa^{+6}, i_{U}(\kappa))_{N^{*}},$

\item  $\mathbb{R}_{U}^{\Add, 1}= \Add(\kappa^{+}, \kappa^{+4})_{N^{*}},$

\item  $\mathbb{R}_{U}^{\Add, 2}= \Add(\kappa^{++}, \kappa^{+5})_{N^{*}},$
\item  $\mathbb{R}_{U}^{\Add, 3}=  \Add(\kappa^{+3}, \kappa^{+6})_{N^{*}},$
\item  $\mathbb{R}_{U}^{\Add, 4}= (\Add(\kappa^{+4}, i_{U}(\kappa)^{+})  \times \Add(\kappa^{+5}, (i_{U}(\kappa)^{++})_{N^{*2}})  \times \Add(\kappa^{+6}, (i_{U}(\kappa)^{+3})_{N^{*2}}) )_{N^{*}},$ where $N^{*2}$ is the second iterate of $V^*$ by $U$,

\item $\mathbb{R}_{U}^{\Add}=\mathbb{R}_{U}^{\Add,1} \times \mathbb{R}_{U}^{\Add,2} \times \mathbb{R}_{U}^{\Add,3} \times \mathbb{R}_{U}^{\Add,4},$

\item $\mathbb{R}_{U} = \mathbb{R}_{U}^{\Add} \times \mathbb{R}_{U}^{\Col}.$
\end{enumerate}
\end{definition}
\begin{remark}
$(j_{\tau}(\kappa)^{++})_{M^*_{\tau}}=(j_{\tau}(\kappa)^{++})_{M^{*2}_{\tau}}$ and $(j_{\tau}(\kappa)^{+3})_{M^*_{\tau}}=(j_{\tau}(\kappa)^{+3})_{M^{*2}_{\tau}}$, where $M^{*2}_{\tau}$ is the second iterate of $V^*$ by $E(\tau).$ Similarly
$(j_{\bar{E}}(\kappa)^{++})_{M^*_{\bar{E}}}=(j_{\bar{E}}(\kappa)^{++})_{M^{*2}_{\bar{E}}}$ and $(j_{\bar{E}}(\kappa)^{+3})_{M^*_{\bar{E}}}=(j_{\bar{E}}(\kappa)^{+3})_{M^{*2}_{\bar{E}}}$, where $M^{*2}_{\bar{E}}$ is the second iterate of $V^*$ by $\bar{E}.$
\end{remark}

\begin{definition}
Let
\begin{enumerate}
\item $\mathbb{R}_{\tau}^{\Col}=\Col(\kappa^{+6}, j_{\tau}(\kappa))_{M_{\tau}^{*}},$

\item  $\mathbb{R}_{\tau}^{\Add, 1}= \Add(\kappa^{+}, \kappa^{+4})_{M_{\tau}^{*}},$

\item  $\mathbb{R}_{\tau}^{\Add, 2}= \Add(\kappa^{++}, \kappa^{+5})_{M_{\tau}^{*}},$
\item  $\mathbb{R}_{\tau}^{\Add, 3}= \Add(\kappa^{+3}, \kappa^{+6})_{M_{\tau}^{*}},$
\item  $\mathbb{R}_{\tau}^{\Add, 4}= (\Add(\kappa^{+4}, j_{\tau}(\kappa)^{+})  \times \Add(\kappa^{+5}, j_{\tau}(\kappa)^{++})  \times \Add(\kappa^{+6}, j_{\tau}(\kappa)^{+3}) )_{M_{\tau}^{*}},$

\item $\mathbb{R}_{\tau}^{\Add}=\mathbb{R}_{\tau}^{\Add,1} \times \mathbb{R}_{\tau}^{\Add,2} \times \mathbb{R}_{\tau}^{\Add,3} \times \mathbb{R}_{\tau}^{\Add,4},$
\item $\mathbb{R}_{\tau} = \mathbb{R}_{\tau}^{\Add} \times \mathbb{R}_{\tau}^{\Col}.$
\end{enumerate}
\end{definition}

\begin{definition}
Let
\begin{enumerate}
\item $\mathbb{R}_{\bar{E}}^{\Col}=\Col(\kappa^{+6}, j_{\bar{E}}(\kappa))_{M_{\bar{E}}^{*}},$
\item  $\mathbb{R}_{\bar{E}}^{\Add, 1}= \Add(\kappa^{+}, \kappa^{+4})_{M_{\bar{E}}^{*}},$

\item  $\mathbb{R}_{\bar{E}}^{\Add, 2}= \Add(\kappa^{++}, \kappa^{+5})_{M_{\bar{E}}^{*}},$
\item  $\mathbb{R}_{\bar{E}}^{\Add, 3}= \Add(\kappa^{+3}, \kappa^{+6})_{M_{\bar{E}}^{*}},$
\item  $\mathbb{R}_{\bar{E}}^{\Add, 4}= (\Add(\kappa^{+4}, j_{\bar{E}}(\kappa)^{+})  \times \Add(\kappa^{+5}, j_{\bar{E}}(\kappa)^{++})  \times \Add(\kappa^{+6}, j_{\bar{E}}(\kappa)^{+3}) )_{M_{\bar{E}}^{*}},$

\item $\mathbb{R}_{\bar{E}}^{\Add}=\mathbb{R}_{\bar{E}}^{\Add,1} \times \mathbb{R}_{\bar{E}}^{\Add,2} \times \mathbb{R}_{\bar{E}}^{\Add,3} \times \mathbb{R}_{\bar{E}}^{\Add,4},$
\item $\mathbb{R}_{\bar{E}} = \mathbb{R}_{\bar{E}}^{\Add} \times \mathbb{R}_{\bar{E}}^{\Col}.$
\end{enumerate}
\end{definition}
Also define the forcing notion $\mathbb{P}$ as follows
\begin{center}
 $\mathbb{P}=\mathbb{P}_{1} \times \mathbb{P}_{2} \times \mathbb{P}_{3} = \Add(\kappa^{+}, (\kappa^{+4})_{M^*_{\bar{E}}}) \times \Add(\kappa^{++}, (\kappa^{+5})_{M^*_{\bar{E}}}) \times \Add(\kappa^{+3}, (\kappa^{+6})_{M^*_{\bar{E}}}) $\footnote{ Hence $\mathbb{P}$ is forcing isomorphic to $\Add(\kappa^{+}, \kappa^{+3}) \times \Add(\kappa^{++}, \kappa^{+3}) \times \Add(\kappa^{+3}, \kappa^{+3})$.}
\end{center}
and let $G=G_{1} \times G_{2} \times G_{3}$ be $\mathbb{P}-$generic over $V^*$. It is clear that $V^{*}[G]$ is a cofinality-preserving generic extension of $V^{*}$ and that $GCH$ holds in $V^{*}[G]$ below and at $\kappa.$ The forcing $\mathbb P$ is our (weak) ``preparation
forcing'' (which preserves the $GCH$ below $\kappa$). We set $V=V^*[G]$.

\begin{remark}
$(a)$ It is also possible to work with  $\mathbb{P}= \Add(\kappa^{+}, \kappa^{+4}) \times \Add(\kappa^{++}, \kappa^{+5}) \times \Add(\kappa^{+3}, \kappa^{+6}).$

$(b)$ We also require that $G_2 \times G_3$ contains some special
element, that we will specify later (see the notes after Claim 3.13). The element will play the role of a master condition, and it will be used in the proof of Lemma 3.8$(d)$.
\end{remark}
\begin{lemma}
$(a)$ $G_{U}= \langle i_{U}^{''} G_{1} \rangle \times \langle i_{U}^{''} G_{2} \rangle \times \langle i_{U}^{''} G_{3} \rangle$ is $\mathbb{P}_{U}=i_{U}(\mathbb{P})-$generic over $N^{*}$,

$(b)$ $G_{\tau}= \langle j_{\tau}^{''} G_{1} \rangle \times \langle j_{\tau}^{''} G_{2} \rangle \times \langle j_{\tau}^{''} G_{3} \rangle$ is $\mathbb{P}_{\tau}=j_{\tau}(\mathbb{P})-$generic over $M_{\tau}^{*}$,

$(c)$ $G_{\bar{E}}= \langle \bigcup_{\tau < \len(\bar{E})} i_{\tau, \bar{E}}^{''}G_{\tau} \rangle $ is $\mathbb{P}_{\bar{E}}=j_{\bar{E}}(\mathbb{P})-$generic over $M_{\bar{E}}^{*}$,.
\end{lemma}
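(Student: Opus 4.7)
The plan is to prove all three parts via a common template: combine $\kappa^{+}$-closure of the preparation forcing $\mathbb{P}$ with \L o\'s' theorem for the appropriate ultrapower, and for $(c)$ pass through the direct-limit presentation of $M^{*}_{\bar E}$.

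For $(a)$, first observe that $\langle i_U^{''} G_1\rangle$ is a filter in $i_U(\mathbb{P}_1)$ because $G_1$ is directed and $i_U$ is order-preserving; the same holds for $G_2, G_3$ and for the product since $i_U$ commutes with products. For $N^{*}$-genericity, fix a dense open $D\in N^{*}$ in $i_U(\mathbb{P}_1)$. Every element of $N^{*}=\Ult(V^{*},U)$ is represented by a function from $V^{*}$, so $D=i_U(\vec D)(\kappa)=[\vec D]_U$ for some $\vec D\in V^{*}$, and after modifying on a $U$-null set we may take $\vec D(\nu)$ to be a dense open subset of $\mathbb{P}_1$ for every $\nu<\kappa$. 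Since $\mathbb{P}_1$ is forcing-isomorphic to $\Add(\kappa^{+},\kappa^{+3})^{V^{*}}$, it is $\kappa^{+}$-closed in $V^{*}$, so by a standard descending construction of length $\kappa$ the intersection $\bigcap_{\nu<\kappa}\vec D(\nu)$ is dense in $\mathbb{P}_1$. By genericity of $G_1$ there is $p\in G_1\cap\bigcap_{\nu<\kappa}\vec D(\nu)$; then $\{\nu<\kappa\mid p\in\vec D(\nu)\}=\kappa\in U$, so \L o\'s' theorem yields $i_U(p)\in D$, showing $D\cap\langle i_U^{''}G_1\rangle\neq\emptyset$. The same argument handles $G_2, G_3$ (whose factors are even more closed) and extends to the product.

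Part $(b)$ is identical in structure, replacing $i_U, U$ by $j_\tau, E(\tau)$: any dense open $D\in M^{*}_\tau$ of $j_\tau(\mathbb{P})$ takes the form $D=j_\tau(f)(\bar E_\alpha\restricted\tau)$ with $f\in V^{*}$, and after shrinking the seeds on a measure-zero set we may assume $f(x)$ is dense open in $\mathbb{P}$ for every $x\in V_\kappa$. Because $\kappa$ is inaccessible in $V^{*}$, $|V_\kappa|^{V^{*}}=\kappa$, so $\kappa^{+}$-closure of $\mathbb{P}$ again yields $p\in G\cap\bigcap_{x\in V_\kappa}f(x)$, and \L o\'s' theorem then gives $j_\tau(p)\in D$.

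For $(c)$, use $M^{*}_{\bar E}=\limdir\langle M^{*}_\tau, i_{\tau',\tau}\rangle$. Any dense open $D\in M^{*}_{\bar E}$ of $j_{\bar E}(\mathbb{P})$ has the form $D=i_{\tau,\bar E}(D')$ for some $\tau<\len(\bar E)$ and $D'\in M^{*}_\tau$, and by elementarity of $i_{\tau,\bar E}$ together with $j_{\bar E}(\mathbb{P})=i_{\tau,\bar E}(j_\tau(\mathbb{P}))$, the set $D'$ is dense open in $j_\tau(\mathbb{P})$ over $M^{*}_\tau$. By $(b)$, choose $q\in G_\tau\cap D'$; then $i_{\tau,\bar E}(q)\in D\cap G_{\bar E}$. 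To see $G_{\bar E}$ is actually a filter, note that for $\tau'<\tau$ and any $q\in G_{\tau'}$ there is $p\in G$ with $j_{\tau'}(p)\leq q$, hence $j_\tau(p)=i_{\tau',\tau}(j_{\tau'}(p))\leq i_{\tau',\tau}(q)$, so $i_{\tau',\tau}(q)\in G_\tau$; consequently $i_{\tau',\bar E}^{''}G_{\tau'}=(i_{\tau,\bar E}\circ i_{\tau',\tau})^{''}G_{\tau'}\subseteq i_{\tau,\bar E}^{''}G_\tau$, making the union $\bigcup_{\tau<\len(\bar E)} i_{\tau,\bar E}^{''}G_\tau$ an increasing chain of directed sets, hence directed. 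The main pivot of the whole proof, and the only nontrivial step, is the diagonalization in $(a)$ and $(b)$: it depends on $\mathbb{P}$ being $\kappa^{+}$-closed together with the fact that the index sets of $U$ and $E(\tau)$ both have $V^{*}$-cardinality $\kappa$. Once this is granted, the rest reduces to routine applications of \L o\'s' theorem, elementarity, and direct-limit bookkeeping.
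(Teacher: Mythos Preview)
Your proof is correct and follows essentially the same approach as the paper: use $\kappa^{+}$-closure of $\mathbb{P}$ to intersect the $\kappa$-many dense open sets representing a given dense set in the ultrapower, then apply \L o\'s/elementarity, and handle the direct limit in $(c)$ by pulling back to some $M^{*}_{\tau}$. The only cosmetic differences are that the paper treats the full product $\mathbb{P}$ at once rather than factor by factor, and that you add the (routine) verification that $G_{\bar E}$ is a filter, which the paper leaves implicit.
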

\begin{proof} $(a)$ Suppose $D \in N^{*}$ is dense open in $\mathbb{P}_{U}.$ Let $D=i_{U}(f)(\kappa)$ for some function $f \in V$ on $\kappa.$ Then
\begin{center}
 $D^{*}= \{ \alpha< \kappa: f(\alpha)$ is dense open in $\mathbb{P} \} \in U.$
\end{center}
Since $\mathbb{P}$ is $\kappa^{+}-$closed, $\bigcap_{\alpha \in D^{*}} f(\alpha)$ is dense open in $\mathbb{P}.$ Let $p \in G \cap \bigcap_{\alpha \in D^{*}} f(\alpha).$ Then $i_{U}(p) \in G_{U} \cap D.$

$(b)$ Suppose $D \in M_{\tau}^{*}$ is dense open in $\mathbb{P}_{\tau}.$ Let $D=j_{\tau}(f)(\bar{E_{\alpha}} \upharpoonright \tau)$ for some function $f \in V$ on $V_{\kappa}.$ Then
\begin{center}
 $D^{*}= \{ \bar{\nu} \in  V_{\kappa}: f(\bar{\nu})$ is dense open in $\mathbb{P} \} \in E_{\alpha}(\tau).$
\end{center}
Since $\mathbb{P}$ is $\kappa^{+}-$closed, $\bigcap_{\bar{\nu} \in D^{*}} f(\bar{\nu})$ is dense open in $\mathbb{P}.$ Let $p \in G \cap \bigcap_{\bar{\nu} \in D^{*}} f(\bar{\nu}).$ Then $j_{\tau}(p) \in G_{\tau} \cap D.$

$(c)$ Suppose $D \in M_{\bar{E}}^{*}$ is dense open in $\mathbb{P}_{\bar{E}}.$ Let $\tau < \len(\bar{E})$ and $D_{\tau} \in M_{\tau}^{*}$ be such that $D = i_{\tau, \bar{E}}(D_{\tau}).$ By elementarity $D_{\tau}$ is dense open in $\mathbb{P}_{\tau}.$ Let $p \in G_{\tau} \cap D_{\tau}.$ Then $i_{\tau, \bar{E}}(p) \in G_{\bar{E}} \cap D.$
\end{proof}
The following lemma is now trivial.
\begin{lemma}
The generic filters above are such that

$(a)$ $i_{U}^{''}[G] \subseteq G_{U},$

$(b)$ $j_{\tau}^{''}[G] \subseteq G_{\tau},$

$(c)$ $j_{\bar{E}}^{''}[G] \subseteq G_{\bar{E}},$

$(d)$ $i_{U, \tau^{'}}^{''}[G_{U}] \subseteq G_{\tau},$

$(e)$ $i_{\tau^{'}, \tau}^{''}[G_{\tau^{'}}] \subseteq G_{\tau},$

$(f)$ $i_{\tau, \bar{E}}^{''}[G_{\tau}] \subseteq G_{\bar{E}}.$
\end{lemma}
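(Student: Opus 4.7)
The plan is to observe that all six inclusions are bookkeeping exercises that follow by unwinding the definitions of $G_U$, $G_\tau$, $G_{\bar{E}}$ from Lemma~3.7 and chasing the commutative diagrams of embeddings from the previous subsection. Since the forcing $\mathbb{P}$ decomposes as the product $\mathbb{P}_1\times\mathbb{P}_2\times\mathbb{P}_3$ and each of $G_U,G_\tau,G_{\bar{E}}$ is defined coordinatewise, the entire verification reduces to applying the embeddings to one coordinate at a time.

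For parts~(a)--(c), I would write any $p\in G$ as $p=(p_1,p_2,p_3)$ with $p_i\in G_i$. Since by Lemma~3.7 we have $G_U=\langle i_U''G_1\rangle\times\langle i_U''G_2\rangle\times\langle i_U''G_3\rangle$, each coordinate $i_U(p_i)$ sits in $\langle i_U''G_i\rangle$ by definition, so $i_U(p)\in G_U$; this gives~(a), and (b) is literally the same argument with $i_U$ replaced by $j_\tau$. Part~(c) follows either directly from the definition of $G_{\bar{E}}$ as the filter generated by $\bigcup_{\tau<\len(\bar{E})}i_{\tau,\bar{E}}''G_\tau$, or by composing~(b) with~(f) via the diagram identity $j_{\bar{E}}=i_{\tau,\bar{E}}\circ j_\tau$.

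For parts~(d)--(f), the argument is one diagram chase repeated three times. Given $q\in G_{\tau'}$ with $q=(q_1,q_2,q_3)$ and $q_i\ge j_{\tau'}(p_i)$ for some $p_i\in G_i$, applying $i_{\tau',\tau}$ and invoking the commutativity $i_{\tau',\tau}\circ j_{\tau'}=j_\tau$ gives $i_{\tau',\tau}(q_i)\ge j_\tau(p_i)$, which lies in $\langle j_\tau''G_i\rangle$; upward closure then yields $i_{\tau',\tau}(q)\in G_\tau$, which is~(e). Parts~(d) and~(f) are identical in spirit, using the composition identities $i_{U,\tau}\circ i_U=j_\tau$ and $i_{\tau,\bar{E}}\circ j_\tau=j_{\bar{E}}$ respectively, both of which were read off the commutative diagrams preceding Definition~3.3.

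Because each step uses only (i) that the ultrapower diagrams commute and (ii) that every generic filter in sight is defined as the upward closure of an appropriate image of $G$, there is no real obstacle here; the lemma is ``trivial'' in exactly the sense the authors indicate, and the only point demanding attention is keeping track of the three-factor product structure of $\mathbb{P}$ when unpacking conditions.
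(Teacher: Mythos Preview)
Your proposal is correct and matches the paper's approach: the paper gives no proof at all beyond the sentence ``The following lemma is now trivial,'' and your write-up is precisely the routine unwinding of the definitions of $G_U,G_\tau,G_{\bar E}$ together with the commutativity of the ultrapower diagrams that justifies that remark. One small bookkeeping slip: the definitions of $G_U,G_\tau,G_{\bar E}$ are in the \emph{preceding} lemma (Lemma~3.6), not Lemma~3.7, which is the statement you are proving.
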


It then follows that we have the following lifting diagram.

\begin{align*}
\begin{diagram}
\node{V = \VS[G]}
        \arrow[2]{e,t}{j_\Es}
        \arrow{s,l}{i_{U}}
        \arrow{se,b}{j_{\gt'}}
        \arrow{see,b}{j_\gt}
    \node{}
        \node{\ME = \MSE[G_\Es]}
\\
    \node{N = \NS[G_U]}
         \arrow{e,b}{i_{U, \gt'}}
    \node{M_{\gt'} = M^*_{\gt'}[G_{\gt'}]}
         \arrow{ne,t,3}{i_{\gt', \Es}}
         \arrow{e,b}{i_{\gt', \gt}}
    \node{\Mt = \MSt[G_\gt]}
        \arrow[1]{n,b}{i_{\gt, \Es}}
\end{diagram}
\end{align*}

\begin{lemma}
In $V^{*}[G]$ there are $I_{U}, I_{\tau}$ and $I_{\bar{E}}$ such that

$(a)$ $I_{U}$ is $R_{U}-$generic over $N^{*}[G_{U}]$,

$(b)$ $I_{\tau}$ is $R_{\tau}-$generic over $M_{\tau}^{*}[G_{\tau}]$,

$(c)$ $I_{\bar{E}}$ is $R_{\bar{E}}-$generic over $M_{\bar{E}}^{*}[G_{\bar{E}}]$,

$(d)$ The generics are so that we have the following lifting diagram

\begin{align*}
\begin{diagram}
\node{}
    \node{}
        \node{\ME[I_\Es]}
\\
    \node{N[I_U]}
         \arrow{e,b}{i^*_{U, \gt'}}
    \node{M_{\gt'}[I_{\gt'}]}
         \arrow{ne,t,3}{i^*_{\gt', \Es}}
         \arrow{e,b}{i^*_{\gt', \gt}}
    \node{\Mt[I_\gt]}
        \arrow[1]{n,b}{i^*_{\gt, \Es}}
\end{diagram}
\end{align*}

\end{lemma}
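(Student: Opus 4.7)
The plan is to construct $I_U$, $I_\tau$ (for $\tau < \len(\bar{E}) = \kappa^+$) and $I_{\bar{E}}$ in $V=V^*[G]$ by transfinite recursions of length $\kappa^+$, using the $\kappa^+$-closure of the forcings together with the fact that each target model has cardinality $\kappa^+$ in $V$. First I would establish the size and closure bounds: since $V^*\models GCH$ and each of $N^*$, $M_\tau^*$, $M_{\bar{E}}^*$ is an ultrapower or direct limit of ultrapowers by an extender on $V_\kappa$, each has cardinality $\kappa^+$ in $V^*$, and hence in $V$ because $\mathbb{P}$ preserves cofinalities. Since $\mathbb{P}$ is $\kappa^+$-closed it adds no $\kappa$-sequences, so $N^*$, $M_\tau^*$ and $M_{\bar{E}}^*$ remain $\kappa$-closed in $V$; consequently the collapse and Cohen forcings making up $\mathbb{R}_U$, $\mathbb{R}_\tau$, $\mathbb{R}_{\bar{E}}$ are $\kappa^+$-closed in $V$. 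The extensions $N^*[G_U]$, $M_\tau^*[G_\tau]$ and $M_{\bar{E}}^*[G_{\bar{E}}]$ each have size $\kappa^+$ in $V$, so each contains at most $\kappa^+$ dense subsets of its associated forcing.

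With these bounds in hand I would first construct $I_U$ by the standard diagonal argument: enumerate the $\leq\kappa^+$ dense subsets of $\mathbb{R}_U$ in $N^*[G_U]$ and build a descending $\kappa^+$-sequence of conditions meeting each of them; at a limit of cofinality $\leq\kappa$ the descending sequence lies in $N^*$ (by $\kappa$-closure of $N^*$ in $V$) and hence has a lower bound inside $N^*$. I would then build each $I_\tau$ by a parallel recursion whose initial filter is generated by $i_{U,\tau}''I_U \cup \bigcup_{\tau'<\tau} i_{\tau',\tau}''I_{\tau'}$, and finally $I_{\bar{E}}$ by a recursion starting from the filter generated by $\bigcup_{\tau<\kappa^+} i_{\tau,\bar{E}}''I_\tau$. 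The coherence relations $i_{U,\tau}''I_U \subseteq I_\tau$, $i_{\tau',\tau}''I_{\tau'} \subseteq I_\tau$ and $i_{\tau,\bar{E}}''I_\tau \subseteq I_{\bar{E}}$ then hold by construction, and the liftings $i^*_{\bullet,\bullet}$ are defined on names in the usual way by setting $i^*_{\bullet,\bullet}(\sigma[G_\bullet,I_\bullet]) := i_{\bullet,\bullet}(\sigma)[G_\bullet,I_\bullet]$, producing the diagram of part $(d)$.

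The main obstacle is to verify that at each stage the initial filter --- the one generated by the images of the previously built generics --- is compatible with enough dense sets of the ambient forcing for the recursion to reach genericity. The embeddings $i_{U,\tau}$, $i_{\tau',\tau}$ and $i_{\tau,\bar{E}}$ have critical points $(\kappa^{++})^{N^*}$ and $(\kappa^{+4})^{M^*_{\tau'}}$, so they move conditions non-trivially, and the initial filter is not merely a copy of the previous generic. To secure the required compatibility I would use the master condition promised in $G_2\times G_3$ (to be specified after Claim~3.13): pulled back through the embeddings, this condition bounds the images $i_{\bullet,\bullet}''I_\bullet$ inside specific pieces of $G_2\times G_3$ in such a way that no dense subset of $\mathbb{R}_\tau$ in $M_\tau^*[G_\tau]$ becomes unreachable at a later stage. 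Once this is arranged, each successor step of the recursion meets the next dense set by the standard $\kappa^+$-closure argument, and the lifting diagram of $(d)$ follows from the coherence built in.
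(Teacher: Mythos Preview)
Your plan has a genuine gap at the coherence step for the $\Add(\kappa^+,\cdot)$ component. To push $I_U$ forward along $i_{U,\tau}$ and then extend to a full $\mathbb{R}_\tau$-generic over $M_\tau^*[G_\tau]$, you would need the filter generated by $i_{U,\tau}''I_U$ to admit extensions into every dense subset of $M_\tau^*[G_\tau]$ while remaining inside the filter. The usual device is a master condition below all of $i_{U,\tau}''I_U$, but for $\mathbb{R}_\tau^{\Add,1}=\Add(\kappa^+,(\kappa^{+4})_{M_\tau^*})_{M_\tau^*}$ this is impossible: the forcing is only $\kappa^+$-closed, and $i_{U,\tau}''I_U^{\Add,1}$ consists of $\kappa^+$ pairwise-compatible conditions whose union is not a condition. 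An interleaving argument (alternately meeting dense sets and dropping below the next image condition) also fails, because a condition chosen to enter a dense set need not remain compatible with later image conditions. The master condition you invoke lives only in $G_2\times G_3$ and concerns the $\kappa^{++}$- and $\kappa^{+3}$-closed factors; it does nothing for $\Add(\kappa^+,\cdot)$.

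The paper's proof is organised quite differently: it splits each $\mathbb{R}_\bullet$ into its five factors and treats them by three separate mechanisms. For $i=1,2,3$ one simply takes $I_{\bar E}^{\Add,i}=G_i\cap\mathbb{R}_{\bar E}^{\Add,i}$ --- this is exactly why $\mathbb{P}$ was chosen with the parameters $(\kappa^{+4})_{M_{\bar E}^*}$, $(\kappa^{+5})_{M_{\bar E}^*}$, $(\kappa^{+6})_{M_{\bar E}^*}$ --- and then \emph{pulls back} (not pushes forward) along $i_{\tau,\bar E}^{-1}$ and $i_{U,\bar E}^{-1}$ to obtain $I_\tau^{\Add,i}$ and $I_U^{\Add,1}$; this works because maximal antichains of these factors have size $\leq\kappa^{+i}$, below the relevant critical points, so $i_{\bullet,\bar E}(A)=i_{\bullet,\bar E}''A$. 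For $I_U^{\Add,2,3,4}\times I_U^{\Col}$ the $\kappa^+$-diagonal argument is run \emph{inside} $M_{\bar E}^*$ rather than in $V$; this is essential, because one then applies the internal embedding $i_{U,\tau}^{\bar E}:N^{*\bar E}\to M_\tau^{*\bar E}$ (available since $U,E(\tau)\in M_{\bar E}^*$) to produce $I_\tau^{\Add,4}\times I_\tau^{\Col}$ coherently, and $I_{\bar E}^{\Add,4}\times I_{\bar E}^{\Col}$ is the directed union over $\tau$. Genericity over $N^*[G_U]$, $M_\tau^*[G_\tau]$, $M_{\bar E}^*[G_{\bar E}]$ is then deduced from genericity over $N^*$, $M_\tau^*$, $M_{\bar E}^*$ by Easton's lemma, not built in directly.

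The master condition $\langle p_{\langle\Add,2\rangle},p_{\langle\Add,3\rangle}\rangle\in G_2\times G_3$ has a specific, limited role: it lies below all $i_{U,\bar E}(p_{\langle\alpha,\Add,2\rangle},p_{\langle\alpha,\Add,3\rangle})$, so that via the pullback definition of $I_\tau^{\Add,2,3}$ one gets $i_{U,\tau}''I_U^{\Add,2,3}\subseteq I_\tau^{\Add,2,3}$. It is not a universal compatibility certificate for the whole recursion.
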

\begin{proof}
We will prove the lemma in a sequence of claims.
\begin{claim} $I_{\bar{E}}^{\Add, 1} = G_{1} \cap \mathbb{R}_{\bar{E}}^{\Add, 1}$ is $\mathbb{R}_{\bar{E}}^{\Add, 1}-$generic over $M_{\bar{E}}^{*}.$
\end{claim}
\begin{proof}
Suppose $A$ is a maximal antichain of $\mathbb{R}_{\bar{E}}^{\Add, 1}$ in $M_{\bar{E}}^{*}.$ Let $X= \bigcup \{\dom(p): p \in A \}.$ As $|A| \leq \kappa^{+},$ we have $|X| \leq \kappa^{+},$ and of course $A$ is a maximal antichain  of $\Add(\kappa^{+}, X)_{M_{\bar{E}}^{*}}.$ For simplicity let us assume $|X|= \kappa^{+}.$ Now we have $\Add(\kappa^{+}, X)_{M_{\bar{E}}^{*}}=\Add(\kappa^{+}, X)$ and hence $A$ is a maximal antichain of $\Add(\kappa^{+}, X).$ It then follows that $A$ is a maximal antichain of $\Add(\kappa^{+}, \kappa^{+4}).$ Let $p \in G_{1} \cap A.$ Then $p \in  I_{\bar{E}}^{\Add, 1} \cap A.$
\end{proof}

\begin{claim} $I_{\bar{E}}^{\Add, 2} = G_{2}  \cap \mathbb{R}_{\bar{E}}^{\Add, 2}$ is $\mathbb{R}_{\bar{E}}^{\Add, 2}-$generic over $M_{\bar{E}}^{*}.$
\end{claim}
\begin{proof}
Suppose $A$ is a maximal antichain of $\mathbb{R}_{\bar{E}}^{\Add, 2}$ in $M_{\bar{E}}^{*}.$ Let $X= \bigcup \{\dom(p): p \in A \}.$ As $|A| \leq \kappa^{++},$ we have $|X| \leq \kappa^{++},$ and of course $A$ is a maximal antichain  of $\Add(\kappa^{++}, X)_{M_{\bar{E}}^{*}}.$ For simlpicity let us assume $|X|= \kappa^{++}.$ Now we have $\Add(\kappa^{++}, X)_{M_{\bar{E}}^{*}}=\Add(\kappa^{++}, X)$ and hence $A$ is a maximal antichain of $\Add(\kappa^{++}, X).$ It then follows that $A$ is a maximal antichain of $\Add(\kappa^{++}, \kappa^{+5}).$ Let $p \in G_{2} \cap A.$ Then $p \in  I_{\bar{E}}^{\Add, 2} \cap A.$
\end{proof}
\begin{claim} $I_{\bar{E}}^{\Add, 3} = G_{3}  \cap \mathbb{R}_{\bar{E}}^{\Add, 3}$ is $\mathbb{R}_{\bar{E}}^{\Add, 3}-$generic over $M_{\bar{E}}^{*}.$
\end{claim}
\begin{proof}
Suppose $A$ is a maximal antichain of $\mathbb{R}_{\bar{E}}^{\Add, 3}$ in $M_{\bar{E}}^{*}.$ Let $X= \bigcup \{\dom(p): p \in A \}.$ As $|A| \leq \kappa^{+3},$ we have $|X| \leq \kappa^{+3},$ and of course $A$ is a maximal antichain  of $\Add(\kappa^{+3}, X)_{M_{\bar{E}}^{*}}.$ For simlpicity let us assume $|X|= \kappa^{+3}.$ Now we have $\Add(\kappa^{+3}, X)_{M_{\bar{E}}^{*}}=\Add(\kappa^{+3}, X)$ and hence $A$ is a maximal antichain of $\Add(\kappa^{+3}, X).$ It then follows that $A$ is a maximal antichain of $\Add(\kappa^{+3}, \kappa^{+6}).$ Let $p \in G_{3} \cap A.$ Then $p \in  I_{\bar{E}}^{\Add, 3} \cap A.$
\end{proof}
It follows from the above claims that
\begin{claim}
$I_{\bar{E}}^{\Add, 1} \times I_{\bar{E}}^{\Add, 2} \times I_{\bar{E}}^{\Add, 3}$ is $\mathbb{R}_{\bar{E}}^{\Add, 1} \times \mathbb{R}_{\bar{E}}^{\Add, 2} \times \mathbb{R}_{\bar{E}}^{\Add, 3}-$generic over $M_{\bar{E}}^{*}.$
\end{claim}
\begin{claim}
$I_{U}^{\Add, 1} = \langle i_{U, \bar{E}}^{-1''}(I_{\bar{E}}^{\Add, 1}) \rangle$ is $\mathbb{R}_{U}^{\Add, 1}-$generic over $N^{*}.$
\end{claim}
\begin{proof}
Let $A$ be a maximal antichain of $\mathbb{R}_{U}^{\Add, 1}$ in $N^{*}.$ Then $i_{U, \bar{E}}(A)$ is a maximal antichain of $\mathbb{R}_{\bar{E}}^{\Add, 1}$ in $M_{\bar{E}}^{*}.$ Since $|A| \leq \kappa^{+},$ and $crit(i_{U, \bar{E}})=\kappa^{++}_{N^{*}} > \kappa^{+},$ we have  $i_{U, \bar{E}}(A)=i_{U, \bar{E}}^{''}(A).$ Then $I_{\bar{E}}^{\Add, 1} \cap i_{U, \bar{E}}^{''}(A) \neq \emptyset,$ which implies $I_{U}^{\Add, 1} \cap A \neq \emptyset.$
\end{proof}
Now consider the forcing notion $\mathbb{R}_{U}^{\Add, 2} \times \mathbb{R}_{U}^{\Add, 3} \times \mathbb{R}_{U}^{\Add, 4} \times \mathbb{R}_{U}^{\Col}$. Working in $M_{\bar{E}}^{*},$ this forcing notion is $\kappa^{+}-$closed and there are only $\kappa^{+}-$many maximal antichains of it which are in $N^{*}.$ Thus we can define a descending sequence $\langle \langle p_{\langle \alpha, \Add, 2 \rangle}, p_{\langle \alpha, \Add, 3 \rangle}, p_{\langle \alpha, \Add, 4 \rangle}, p_{\langle \alpha, \Col \rangle} \rangle  : \alpha < \kappa^{+}  \rangle$ of conditions such that $I_{U}^{\Add, 2} \times I_{U}^{\Add, 3} \times I_{U}^{\Add, 4} \times I_{U}^{\Col} = \{p \in \mathbb{R}_{U}^{\Add, 2} \times \mathbb{R}_{U}^{\Add, 3} \times \mathbb{R}_{U}^{\Add, 4} \times \mathbb{R}_{U}^{\Col}: \exists \alpha< \kappa^{+}, \langle p_{\langle \alpha, \Add, 2 \rangle}, p_{\langle \alpha, \Add, 3 \rangle}, p_{\langle \alpha, \Add, 4 \rangle}, p_{\langle \alpha, \Col \rangle} \rangle \leq p  \}$ is $\mathbb{R}_{U}^{\Add, 2} \times \mathbb{R}_{U}^{\Add, 3} \times \mathbb{R}_{U}^{\Add, 4} \times \mathbb{R}_{U}^{\Col}$-generic over $N^{*}.$ Also note that this generic filter is in $M_{\bar{E}}^{*}.$

We may also note that $\{ i_{U, \bar{E}}(p_{\langle \alpha, \Add, 2 \rangle}, p_{\langle \alpha, \Add, 3 \rangle}): \alpha < \kappa^{+} \} \subseteq \mathbb{R}_{\bar{E}}^{\Add, 2} \times \mathbb{R}_{\bar{E}}^{\Add, 3},$ and since this forcing is $\kappa^{++}-$closed, there is $\langle p_{\langle \Add, 2 \rangle}, p_{\langle \Add, 3 \rangle} \rangle \in \mathbb{R}_{\bar{E}}^{\Add, 2} \times \mathbb{R}_{\bar{E}}^{\Add, 3}$ such that $\forall \alpha < \kappa^{+}, \langle p_{\langle \Add, 2 \rangle}, p_{\langle \Add, 3 \rangle} \rangle \leq
i_{U, \bar{E}}(p_{\langle \alpha, \Add, 2 \rangle}, p_{\langle \alpha, \Add, 3 \rangle}).$ We may suppose that $\langle p_{\langle \Add, 2\rangle}, p_{\langle \Add, 3 \rangle} \rangle \in G_{2} \times G_{3}$ (see Remark 3.5$(b)$).

Let $I_{U}= I_{U}^{\Add, 1} \times I_{U}^{\Add, 2} \times I_{U}^{\Add, 3} \times I_{U}^{\Add, 4} \times I_{U}^{\Col}.$ It follows from the above results that $I_{U}$ is $\mathbb{R}_{U}-$generic over $N^{*}.$

\begin{claim}
$(a)$ $I_{\tau}^{\Add, 1} = \langle i_{\tau, \bar{E}}^{-1''}(I_{\bar{E}}^{\Add, 1}) \rangle$ is $\mathbb{R}_{\tau}^{\Add, 1}-$generic over $M_{\tau}^{*},$

$(b)$ $I_{\tau}^{\Add, 2} = \langle i_{\tau, \bar{E}}^{-1''}(I_{\bar{E}}^{\Add, 2}) \rangle$ is $\mathbb{R}_{\tau}^{\Add, 2}-$generic over $M_{\tau}^{*},$

$(c)$ $I_{\tau}^{\Add, 3} = \langle i_{\tau, \bar{E}}^{-1''}(I_{\bar{E}}^{\Add, 3}) \rangle$ is $\mathbb{R}_{\tau}^{\Add, 3}-$generic over $M_{\tau}^{*}.$

\end{claim}
\begin{proof}
$(a)$ Let $A$ be a maximal antichain of $\mathbb{R}_{\tau}^{\Add, 1}$ in $M_{\tau}^{*}.$ Then $i_{\tau, \bar{E}}(A)$ is a maximal antichain of $\mathbb{R}_{\bar{E}}^{\Add, 1}$ in $M_{\bar{E}}^{*}.$ Since $|A| \leq \kappa^{+},$ and $crit(i_{\tau, \bar{E}})=\kappa^{+4}_{M_{\tau}^{*}} > \kappa^{+},$ we have  $i_{\tau, \bar{E}}(A)=i_{\tau, \bar{E}}^{''}(A).$ Then $I_{\bar{E}}^{\Add, 1} \cap i_{\tau, \bar{E}}^{''}(A) \neq \emptyset,$ which implies $I_{\tau}^{\Add, 1} \cap A \neq \emptyset.$

$(b)$ Let $A$ be a maximal antichain of $\mathbb{R}_{\tau}^{\Add, 2}$ in $M_{\tau}^{*}.$ Then $i_{\tau, \bar{E}}(A)$ is a maximal antichain of $\mathbb{R}_{\bar{E}}^{\Add, 2}$ in $M_{\bar{E}}^{*}.$ Since $|A| \leq \kappa^{++},$ and $crit(i_{\tau, \bar{E}})=\kappa^{+4}_{M_{\tau}^{*}} > \kappa^{++},$ we have  $i_{\tau, \bar{E}}(A)=i_{\tau, \bar{E}}^{''}(A).$ Then $I_{\bar{E}}^{\Add, 2} \cap i_{\tau, \bar{E}}^{''}(A) \neq \emptyset,$ which implies $I_{\tau}^{\Add, 2} \cap A \neq \emptyset.$

$(c)$ Let $A$ be a maximal antichain of $\mathbb{R}_{\tau}^{\Add, 3}$ in $M_{\tau}^{*}.$ Then $i_{\tau, \bar{E}}(A)$ is a maximal antichain of $\mathbb{R}_{\bar{E}}^{\Add, 3}$ in $M_{\bar{E}}^{*}.$ Since $|A| \leq \kappa^{+3},$ and $crit(i_{\tau, \bar{E}})=\kappa^{+4}_{M_{\tau}^{*}} > \kappa^{+3},$ we have  $i_{\tau, \bar{E}}(A)=i_{\tau, \bar{E}}^{''}(A).$ Then $I_{\bar{E}}^{\Add, 3} \cap i_{\tau, \bar{E}}^{''}(A) \neq \emptyset,$ which implies $I_{\tau}^{\Add, 1} \cap A \neq \emptyset.$
\end{proof}

As $U \in M_{\bar{E}}^{*}$ and $\forall \gt < \len(\Es)$ $E(\gt) \in M_{\bar{E}}^{*}$
we have the following diagram
\begin{align*}
\begin{aligned}
\begin{diagram}
\node{M_{\bar{E}}^{*}}
        \arrow{s,l}{i_U^\Es}
        \arrow{se,t}{j_\gt^\Es}
\\
\node{N^{* \bar{E}}}
         \arrow{e,b}{i_{U, \gt}^\Es}
        \node{M_{\tau}^{*\bar{E}}}
\end{diagram}
\end{aligned}
\begin{aligned}
\begin{split}
& U = E_\gk(0),
\\
& i^\Es_U \func  M_{\bar{E}}^{*} \to N^{* \bar{E}} \simeq \Ult(M_{\bar{E}}^{*}, U),
\\
& j^\Es_\gt \func  M_{\bar{E}}^{*} \to M_{\tau}^{*\bar{E}} \simeq \Ult(M_{\bar{E}}^{*}, E(\gt)),
\\
& i^\Es_{U, \gt}(i^\Es_U(f)(\gk)) = j^\Es_\gt(f)(\gk).
\end{split}
\end{aligned}
\end{align*}
Recall that we have $I_{U}^{\Add, 4} \times I_{U}^{\Col} \in M_{\bar{E}}^{*}$ which is $\mathbb{R}_{U}^{\Add, 4} \times \mathbb{R}_{U}^{\Col}$-generic over $N^{* \bar{E}}$.

\begin{claim}
There is $I_{\tau}^{\Add, 4} \times I_{\tau}^{\Col} \in M_{\bar{E}}^{*}$ which is $\mathbb{R}_{\tau}^{\Add, 4} \times \mathbb{R}_{\tau}^{\Col}-$generic over $M_{\tau}^{*}.$
\end{claim}
\begin{proof}
We follow the idea from [3]. For this set
\begin{enumerate}
\item $\mathbb{R}_{\tau}^{\bar{E}, \Col}=\Col(\kappa^{+6}, j_{\tau}^{\bar{E}}(\kappa))_{M_{\tau}^{* \bar{E}}},$

\item  $\mathbb{R}_{\tau}^{\bar{E}, \Add, 4}= (\Add(\kappa^{+4}, j_{\tau}^{\bar{E}}(\kappa)^{+})  \times \Add(\kappa^{+5}, j_{\tau}^{\bar{E}}(\kappa)^{++})  \times \Add(\kappa^{+6}, j_{\tau}^{\bar{E}}(\kappa)^{+3}) )_{M_{\tau}^{* \bar{E}}},$

\end{enumerate}
$\mathbb{R}_{\tau}^{\bar{E}, \Add, 4} \times \mathbb{R}_{\tau}^{\bar{E}, \Col}$ and $\mathbb{R}_{\tau}^{ \Add, 4} \times \mathbb{R}_{\tau}^{ \Col}$  are coded in $V^{\MSt}_{j_\gt(\gk)+3}$,
    $V^{\MStE}_{j^\Es_\gt(\gk)+3}$ respectively.
$V^{\MSt}_{j_\gt(\gk)+3}$, $V^{\MStE}_{j^\Es_\gt(\gk)+3}$
are determined by $V^{V^*}_{\gk+3}$, $V^{\MSE}_{\gk+3}$ (and $E(\gt)$,
of course).
As $E(\gt) \in \MSE$ and $V^{V^*}_{\gk+3} = V^\MSE_{\gk+3}$ we get that
$\mathbb{R}_{\tau}^{\bar{E}, \Add, 4} \times \mathbb{R}_{\tau}^{\bar{E}, \Col}=\mathbb{R}_{\tau}^{ \Add, 4} \times \mathbb{R}_{\tau}^{ \Col}.$

By the same reasoning, each antichain of $\mathbb{R}_{\tau}^{ \Add, 4} \times \mathbb{R}_{\tau}^{ \Col}$ appearing in
$M_{\tau}^{*}$
is also an anti-chain of $\mathbb{R}_{\tau}^{\bar{E}, \Add, 4} \times \mathbb{R}_{\tau}^{\bar{E}, \Col}$ appearing in $M_{\tau}^{*\bar{E}}$.
Hence, if $I_{\tau}^{\Add, 4} \times I_{\tau}^{\Col} \in M_{\bar{E}}^{*}$ is
$\mathbb{R}_{\tau}^{\bar{E}, \Add, 4} \times \mathbb{R}_{\tau}^{\bar{E}, \Col}-$generic filter over $M_{\tau}^{*\bar{E}}$
then it is also $\mathbb{R}_{\tau}^{\Add, 4} \times \mathbb{R}_{\tau}^{\Col}-$generic over $M_{\tau}^{*}.$

Let $I_{\tau}^{\Add, 4} \times I_{\tau}^{\Col} = \langle i_{U, \tau}^{\bar{E}''}(I_{U}^{\Add, 4} \times I_{U}^{\Col} )  \rangle.$ We show that it is as required. So let $D \in M_{\tau}^{*\bar{E}}$ be dense open in $\mathbb{R}_{\tau}^{\bar{E}, \Add, 4} \times \mathbb{R}_{\tau}^{\bar{E}, \Col}.$ Then $D=j_{\tau}^{\bar{E}}(f)(\bar{E}_{\alpha}\upharpoonright \tau)$ for some function $f \in M_{\bar{E}}^{*}$ on $V_{\kappa}.$ It then follows that in $M_{\bar{E}}^{*}$
\begin{center}
 $D^{*}=\{\bar{\nu} \in V_{\kappa}: f(\bar{\nu})$ is dense open in $\mathbb{R}_{\bar{E}}^{\Add, 4} \times \mathbb{R}_{\bar{E}}^{\Col} \} \in E_{\alpha}(\tau).$
\end{center}
It is easily seen that
\begin{center}
 $B = \{\mu: |\{\bar{\nu} \in D^{*}: \kappa^{0}(\bar{\nu})=\mu \}| \leq \mu^{+3} \} \in E_{\kappa}(0).$
\end{center}
Thus for each $\mu \in B$ we can find $f^{*}(\mu)$ such that for all $\bar{\nu} \in D^{*}$ with $\kappa^{0}(\bar{\nu})=\mu$ we have $f^{*}(\mu) \subseteq f(\bar{\nu})$ is dense open (in $M_{\bar{E}}^{*}$). Hence
\begin{center}
 $N^{*\bar{E}} \models `` i_{U}^{\bar{E}}(f^{*})(\kappa)$ is dense open ''.
\end{center}
and
\begin{center}
 $M_{\tau}^{*\bar{E}} \models `` j_{\tau}^{\bar{E}}(f^{*})(\kappa) \subseteq j_{\tau}^{\bar{E}}(f)(\bar{E}_{\alpha}\upharpoonright\tau)$''.
\end{center}
So there is $g \in M_{\bar{E}}^{*}$ such that $i_{U}^{\bar{E}}(g)(\kappa) \in (I_{U}^{\Add, 4} \times I_{U}^{\Col}) \cap i_{U}^{\bar{E}}(f^{*})(\kappa). $ It then follows that $j_{\tau}^{\bar{E}}(g)(\kappa) \in i_{U, \tau}^{\bar{E}''}(I_{U}^{\Add, 4} \times I_{U}^{\Col}) \cap j_{\tau}^{\bar{E}}(f^{*})(\kappa). $
This means that $(I_{\tau}^{\Add, 4} \times I_{\tau}^{\Col})    \cap j_{\tau}^{\bar{E}}(f)(\bar{E}_{\alpha}\upharpoonright \tau) \neq \emptyset.$
The result follows.

\end{proof}

Now let $I_{\tau}= I_{\tau}^{\Add, 1} \times I_{\tau}^{\Add, 2} \times I_{\tau}^{\Add, 3} \times I_{\tau}^{\Add, 4} \times I_{\tau}^{\Col}.$ It follows that $I_{\tau}$ is $\mathbb{R}_{\tau}-$generic over $M_{\tau}^{*}.$

\begin{claim}
$I_{\bar{E}}^{\Add, 4} \times I_{\bar{E}}^{\Col} = \langle \bigcup_{\tau < \len(\bar{E})}i_{\tau, \bar{E}}^{''} (I_{\tau}^{\Add, 4} \times I_{\tau}^{\Col}) \rangle$ is $\mathbb{R}_{\bar{E}}^{\Add, 4} \times \mathbb{R}_{\bar{E}}^{\Col}-$generic over $M_{\bar{E}}^{*}.$
\end{claim}
\begin{proof}
 Let $D$ be a dense open subset of $\mathbb{R}_{\bar{E}}^{\Add, 4} \times \mathbb{R}_{\bar{E}}^{\Col}$ in $M_{\bar{E}}^{*}.$ Let $\tau < l(\bar{E})$ and $D_{\tau} \in M_{\tau}^{*}$ be such that $D = i_{\tau, \bar{E}}(D_{\tau}).$ By elementarity $D_{\tau}$ is dense open in $\mathbb{R}_{\tau}^{\Add, 4} \times \mathbb{R}_{\tau}^{\Col}.$ Let $\langle p, q \rangle \in (I_{\tau}^{\Add, 4} \times I_{\tau}^{\Col}) \cap D_{\tau}.$ Then $\langle i_{\tau, \bar{E}}(p), i_{\tau, \bar{E}}(q) \rangle \in (I_{\bar{E}}^{\Add, 4} \times I_{\bar{E}}^{\Col}) \cap D.$
\end{proof}

Let $I_{\bar{E}}= I_{\bar{E}}^{\Add, 1} \times I_{\bar{E}}^{\Add, 2} \times I_{\bar{E}}^{\Add, 3} \times I_{\bar{E}}^{\Add, 4} \times I_{\bar{E}}^{\Col}.$ It follows that $I_{\bar{E}}$ is $\mathbb{R}_{\bar{E}}-$generic over $M_{\bar{E}}^{*}.$

To summarize, so far we have shown the following
\begin{itemize}
\item $I_{U}$ is $\mathbb{R}_{U}-$generic over $N^{*}$, \item $I_{\tau}$ is $\mathbb{R}_{\tau}-$generic over $M_{\tau}^{*}$, \item $I_{\bar{E}}$ is $\mathbb{R}_{\bar{E}}-$generic over $M_{\bar{E}}^{*}.$
\end{itemize}
Before continuing we recall Easton's lemma.
\begin{lemma}
(Easton's Lemma). Let $\lambda$ be regular uncountable, and suppose that $\mathbb{P}$ satisfies the $\lambda-c.c.$ and $\mathbb{Q}$ is $\lambda-$closed. Then

$(a)$ $\vdash_{\mathbb{P} \times \mathbb{Q}} `` \lambda$ is a regular uncountable cardinal'',

$(b)$  $\vdash_{\mathbb{Q}} `` \mathbb{P}$ satisfies the $\lambda-c.c. $'',

$(c)$ $\vdash_{\mathbb{P}} `` \mathbb{Q}$ is $\lambda-$distributive''.
\end{lemma}

\begin{claim}
$I_{U}$ is $\mathbb{R}_{U}-$generic over $N^{*}[G_{U}].$
\end{claim}
\begin{proof}
First note that in $N^{*}$ the forcing notions $\mathbb{R}_{U}$ and $\mathbb{P}_{U}$ are $i_{U}(\kappa)^{+}-c.c$ and $i_{U}(\kappa)^{+}-$closed respectively. Now let $A$ be a maximal antichain of $\mathbb{R}_{U}$ in $N^{*}[G].$ By Easton's Lemma $|A| \leq i_{U}(\kappa),$ hence again by Easton's Lemma $A \in N^{*}.$ It follows that $I_{U} \cap A \neq \emptyset,$ as $I_{U}$ is $\mathbb{R}_{U}-$generic over $N^{*}.$ The result follows.
\end{proof}
By similar arguments
\begin{claim}
$I_{\tau}$ is $\mathbb{R}_{\tau}-$generic over $M_{\tau}^{*}[G_{\tau}].$
\end{claim}
\begin{claim}
$I_{\bar{E}}$ is $\mathbb{R}_{\bar{E}}-$generic over $M_{\bar{E}}^{*}[G_{\bar{E}}].$
\end{claim}
It remains to prove part $(d)$ of lemma 3.8.
Before going into details let's recall a simple observation.
\begin{claim}
$(a)$ $V^{\NSE}_{i_{U}^\Es(\gk)+3} = V^\NS_{i_{U}(\gk)+3},$

$(b)$  $i^\Es_{U, \gt} \restricted
                V^\NSE_{i_{U}^\Es(\gk)+3} =
                    i_{U, \gt} \restricted
                V^\NS_{i_{U}(\gk)+3}$,

$(c)$  $i^\Es_{\gt', \gt} \restricted
                V^\NSE_{i_{U}^\Es(\gk)+3} =
                    i_{\gt', \gt} \restricted   V^\NS_{i_{U}(\gk)+3}$.
\end{claim}

\begin{claim}
$i_{U, \tau^{'}}^{''}(I_{U}) \subseteq I_{\tau^{'}}.$
\end{claim}
\begin{proof}
We have
\begin{enumerate}
\item $i_{U, \tau^{'}}^{''}(I_{U}^{\Add, 1}) \subseteq I_{\tau^{'}}^{\Add, 1}:$ This is because $i_{U, \tau^{'}}^{''}(I_{U}^{\Add, 1})=i_{U, \tau^{'}}^{''} (\langle i_{U, \bar{E}}^{-1''}(I_{\bar{E}}^{\Add, 1}) \rangle) \subseteq  \langle i_{\tau^{'}, \bar{E}}^{-1''}(I_{\bar{E}}^{\Add, 1}) \rangle =I_{\tau^{'}}^{\Add, 1},$
\item  $i_{U, \tau^{'}}^{''}(I_{U}^{\Add, 2}) \subseteq I_{\tau^{'}}^{\Add, 2}:$ It suffices to show that $\forall \alpha < \kappa^{+}, i_{U, \tau^{'}}(p_{\langle \alpha, \Add, 2  \rangle}) \in I_{\tau^{'}}^{\Add, 2}.$ But we have $p_{\langle \Add, 2 \rangle} \in I_{\bar{E}}^{\Add, 2}$ and $\forall \alpha < \kappa^{+}, p_{\langle \Add, 2 \rangle} \leq  i_{U, \bar{E}}(p_{\langle \alpha, \Add, 2 \rangle}).$ It then follows that $\forall \alpha < \kappa^{+}, i_{U, \tau^{'}}(p_{\langle \alpha, \Add, 2 \rangle})=i_{\tau^{'}, \bar{E}}^{-1}(i_{U, \bar{E}}(p_{\langle \alpha, \Add, 2 \rangle})) \geq i_{\tau^{'}, \bar{E}}^{-1}(p_{\langle \Add, 2 \rangle}).$ But now note that by our definition $i_{\tau^{'}, \bar{E}}^{-1}(p_{\langle \Add, 2 \rangle}) \in I_{\tau^{'}}^{\Add, 2}.$ It then follows that $i_{U, \tau^{'}}(p_{\langle \alpha, \Add, 2  \rangle}) \in I_{\tau^{'}}^{\Add, 2},$

\item  $i_{U, \tau^{'}}^{''}(I_{U}^{\Add, 3}) \subseteq I_{\tau^{'}}^{\Add, 3}:$ By the same argument as in $(2)$ using the fact that  $p_{\langle \Add, 3 \rangle} \in I_{\bar{E}}^{\Add, 3} ,$
\item  $i_{U, \tau^{'}}^{''}(I_{U}^{\Add, 4} \times I_{U}^{\Col}) \subseteq I_{\tau^{'}}^{\Add, 4} \times I_{\tau^{'}}^{\Col}:$ Trivial by the definition of $I_{\tau^{'}}^{\Add, 4} \times I_{\tau^{'}}^{\Col}$ and the previous Claim.
\end{enumerate}
The result follows.
\end{proof}

\begin{claim}
$i_{\tau^{'}, \tau}^{''}(I_{\tau^{'}}) \subseteq I_{\tau}.$
\end{claim}
\begin{proof}
We have
\begin{enumerate}
 \item $i_{\tau^{'}, \tau}^{''}(I_{\tau^{'}}^{\Add, 1}) \subseteq I_{\tau}^{\Add, 1}:$ Because $i_{\tau^{'}, \tau}^{''}(I_{\tau^{'}}^{\Add, 1})=i_{\tau^{'}, \tau}^{''} (\langle i_{\tau^{'}, \Es}^{-1''}(I_{\bar{E}}^{\Add, 1}) \rangle) \subseteq  \langle i_{\tau, \bar{E}}^{-1''}(I_{\bar{E}}^{\Add, 1}) \rangle =I_{\tau}^{\Add, 1},$
  \item $i_{\tau^{'}, \tau}^{''}(I_{\tau^{'}}^{\Add, 2}) \subseteq I_{\tau}^{\Add, 2}:$ By the same argument as in $(1),$
 \item $i_{\tau^{'}, \tau}^{''}(I_{\tau^{'}}^{\Add, 3}) \subseteq I_{\tau}^{\Add, 3}:$ By the same argument as in $(1),$
 \item $i_{\tau^{'}, \tau}^{''}(I_{\tau^{'}}^{\Add, 4} \times I_{\tau^{'}}^{\Col}) \subseteq I_{\tau}^{\Add, 4} \times I_{\tau}^{\Col}:$ Because $i_{\tau^{'}, \tau}^{''}(I_{\tau^{'}}^{\Add, 4} \times I_{\tau^{'}}^{\Col}) =i_{\tau^{'}, \tau}^{''}( \langle i_{U, \tau^{'}}^{\bar{E}''}(I_{U}^{\Add, 4} \times I_{U}^{\Col})  \rangle) \subseteq \langle i_{U, \tau}^{\bar{E}''}(I_{U}^{\Add, 4} \times I_{U}^{\Col})  \rangle =I_{\tau}^{\Add, 4} \times I_{\tau}^{\Col}.$

\end{enumerate}
The result follows.

\end{proof}
\begin{claim}
$i_{\tau, \bar{E}}^{''}(I_{\tau}) \subseteq I_{\bar{E}}.$
\end{claim}
\begin{proof}
We have
\begin{enumerate}
 \item $i_{\tau, \bar{E}}^{''}(I_{\tau}^{\Add, 1}) \subseteq I_{\bar{E}}^{\Add, 1}:$ Trivial as  $i_{\tau, \bar{E}}^{''}(I_{\tau}^{\Add, 1}) =i_{\tau, \bar{E}}^{''}(\langle i_{\tau, \bar{E}}^{-1''}(I_{\tau}^{\Add, 1}) \rangle) \subseteq I_{\bar{E}}^{\Add, 1},$
\item $i_{\tau, \bar{E}}^{''}(I_{\tau}^{\Add, 2}) \subseteq I_{\bar{E}}^{\Add, 2}:$ As in $(1),$
\item $i_{\tau, \bar{E}}^{''}(I_{\tau}^{\Add, 3}) \subseteq I_{\bar{E}}^{\Add, 3}:$ As in $(1),$
\item $i_{\tau, \bar{E}}^{''}(I_{\tau}^{\Add, 4} \times I_{\tau}^{\Col}) \subseteq I_{\bar{E}}^{\Add, 4} \times I_{\bar{E}}^{\Col}:$ Trivial by the definition of $I_{\bar{E}}^{\Add, 4} \times I_{\bar{E}}^{\Col}.$
\end{enumerate}
The result follows.

\end{proof}
This completes the proof of lemma 3.8.
\end{proof}

We iterate $j_\Es$ $\omega$-many times and consider the following diagram
\begin{align*}
\begin{diagram}
\node{V}
        \arrow[2]{e,t}{j_\Es = j^{0,1}_\Es}
        \arrow{se,t,1}{j_{\gt_1}}
        \arrow{s,l}{i_{U}}
    \node{}
        \node{\ME}
        \arrow[2]{e,t}{j^{1,2}_\Es}
        \arrow{se,t,1}{j^2_{\gt_2}}
        \arrow{s,l}{i^2_{U}}
    \node{}
        \node{M_\Es^2}
        \arrow[2]{e,t}{j^{2,3}_\Es}
        \arrow{se,t,1}{j^3_{\gt_3}}
        \arrow{s,l}{i^3_{U}}
    \node{}
        \node{M_\Es^3}
        \arrow[1]{e,..}
\\
\node{N}
        \arrow[1]{e,b}{i_{U, \gt_1}}
        \arrow[1]{nee,t,3}{i_{U, \Es}}
    \node{M_{\gt_1}}
        \arrow[1]{ne,b,1}{i_{\gt_1, \Es}}
    \node{N^2}
        \arrow[1]{e,b}{i^2_{U, \gt_2}}
        \arrow[1]{nee,t,3}{i^2_{U, \Es}}
    \node{M^{2}_{\gt_2}}
        \arrow[1]{ne,b,1}{i^2_{\gt_2, \Es}}
    \node{N^3}
        \arrow[1]{e,b}{i^3_{U, \gt_3}}
        \arrow[1]{nee,t,3}{i^3_{U, \Es}}
    \node{M^{3}_{\gt_3}}
        \arrow[1]{ne,b,1}{i^3_{\gt_3, \Es}}
\end{diagram}
\end{align*}
where
\begin{align*}
& j^0_\Es = \id,
\\
& j^{n}_\Es = j^{0, n}_\Es,
\\
& j^{m, n}_\Es = j^{n-1, n}_\Es \circ \dotsb \circ j^{m+1, m+2}_\Es \circ j^{m ,m+1}_\Es.
\end{align*}

Let $R(-,-)=R^{\Add}(-,-) \times R^{\Col}(-,-)$ be a function such that
\begin{center}
$i_{U}^{2}(R^{\Add})(\kappa, i_{U}(\kappa))=\mathbb{R}_{U}^{\Add},$

$i_{U}^{2}(R^{\Col})(\kappa, i_{U}(\kappa))=\mathbb{R}_{U}^{\Col},$
\end{center}
where $i_{U}^{2}$ is the second iterate of $i_{U}.$ Then we will have
\begin{center}
$i_{U}^{2}(R)(\kappa, i_{U}(\kappa))=\mathbb{R}_{U}.$
\end{center}
The following is trivial.
\begin{lemma}
$(a)$ $j_{\bar{E}}^{2}(R^{\Add})(\kappa, j_{\bar{E}}(\kappa))=\mathbb{R}_{\bar{E}}^{\Add},$

$(b)$ $j_{\bar{E}}^{2}(R^{\Col})(\kappa, j_{\bar{E}}(\kappa))=\mathbb{R}_{\bar{E}}^{\Col},$

$(c)$ $j_{\bar{E}}^{2}(R)(\kappa, j_{\bar{E}}(\kappa))=\mathbb{R}_{\bar{E}}.$
\end{lemma}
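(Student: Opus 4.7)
The proof will be essentially by elementarity, leveraging the uniform way the forcings $\mathbb{R}_\bullet$ are built from two parameters. The plan is as follows.

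First I would pin down what $R$ must be as a function in $V^{*}$. The defining equations $i_U^2(R^{\Add})(\kappa,i_U(\kappa))=\mathbb{R}_U^{\Add}$ and $i_U^2(R^{\Col})(\kappa,i_U(\kappa))=\mathbb{R}_U^{\Col}$ force $R$ to be the function sending an inaccessible $\mu$ and a $\nu>\mu$ to
\[
R^{\Add}(\mu,\nu)=\Add(\mu^+,\mu^{+4})\times\Add(\mu^{++},\mu^{+5})\times\Add(\mu^{+3},\mu^{+6})\times\Add(\mu^{+4},\nu^+)\times\Add(\mu^{+5},\nu^{++})\times\Add(\mu^{+6},\nu^{+3})
\]
and $R^{\Col}(\mu,\nu)=\Col(\mu^{+6},\nu)$, with successors and forcings interpreted inside the ambient model. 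Indeed, applying $i_U^2$ and evaluating at $(\kappa,i_U(\kappa))$ yields exactly this product, computed in $N^{*2}$; and this matches $\mathbb{R}_U$ as written, because the first three Cohen factors only involve cardinals $\le\kappa^{+6}$ which is strictly below $\crit(N^*\to N^{*2})=i_U(\kappa)$, and the fourth block explicitly uses the $N^{*2}$-successors of $i_U(\kappa)$.

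With $R$ so described, parts (a) and (b) follow by applying $j_{\bar E}$ to this definition. By elementarity, $j_{\bar E}^2(R^{\Add})(\kappa,j_{\bar E}(\kappa))$ is the same six-fold product with $\mu=\kappa$ and $\nu=j_{\bar E}(\kappa)$, now computed inside $M^{*2}_{\bar E}$. Comparing with $\mathbb{R}_{\bar E}^{\Add}$, which is defined in $M^*_{\bar E}$: the cardinals $\kappa^{+1},\dots,\kappa^{+6}$ and the Cohen forcings they index agree between $M^*_{\bar E}$ and $M^{*2}_{\bar E}$, because the critical point of $M^*_{\bar E}\to M^{*2}_{\bar E}$ is $j_{\bar E}(\kappa)\gg\kappa^{+6}$; and by the remark after Definition 3.4, $(j_{\bar E}(\kappa)^{+k})_{M^*_{\bar E}}=(j_{\bar E}(\kappa)^{+k})_{M^{*2}_{\bar E}}$ for $k=2,3$, so the top block matches as well. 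The same verification with $R^{\Col}$ gives (b), and (c) follows by taking the product.

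I do not expect any genuine obstacle here; the lemma is essentially bookkeeping. The one thing worth flagging is that the uniform formula defining $R$ must be the \emph{same} formula (depending only on the successor operation, $\Add$, and $\Col$) used both at the $U$-iteration and at the $\bar{E}$-iteration. Once this is granted, elementarity does all the work, with the agreement between each inner model and its second iterate at the top three successors of $j_{\bar E}(\kappa)$ (noted in the remark) as the sole non-trivial input.
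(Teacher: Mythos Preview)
Your proposal is correct and follows exactly the line the paper has in mind; the paper itself simply declares the lemma ``trivial'' and gives no argument, so your explicit unpacking of $R$ and the elementarity verification is more than what appears there. One tiny addition: you cite the remark for $k=2,3$ but should also note $(j_{\bar E}(\kappa)^{+})_{M^{*2}_{\bar E}}=(j_{\bar E}(\kappa)^{+})_{M^{*}_{\bar E}}$, which holds since $M^{*2}_{\bar E}\supseteq V^{M^{*}_{\bar E}}_{j_{\bar E}(\kappa)+3}$ (indeed this is why the paper's remark only bothers to state the less obvious cases $k=2,3$).
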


{\bf Cardinal structure and the power function in} {\bf $N^{*}[I_{U}]$.} The following lemma gives us everything that we need about the model $N^{*}[I_{U}]$.
\begin{lemma}
$(a)$ In $N^{*}[I_{U}]$ there are no cardinals in $[\kappa^{+7}, i_{U}(\kappa)]$ and all other $N^{*}-$cardinals are preserved,

$(b)$ The power function differs from the power function of $N^{*}$ at the following points: $2^{\kappa^{+}}=\kappa^{+4}, 2^{\kappa^{++}}=\kappa^{+5}, 2^{\kappa^{+3}}=\kappa^{+6}, 2^{\kappa^{+4}}=i_{U}(\kappa)^{+}, 2^{\kappa^{+5}}=i_{U}(\kappa)^{++}, 2^{\kappa^{+6}}=i_{U}(\kappa)^{+3}.$
\end{lemma}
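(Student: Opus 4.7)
The plan is to decompose $\mathbb{R}_U = \mathbb{R}_U^{\Add,1} \times \mathbb{R}_U^{\Add,2} \times \mathbb{R}_U^{\Add,3} \times \mathbb{R}_U^{\Add,4} \times \mathbb{R}_U^{\Col}$ and analyze each factor over $N^*$, which inherits $GCH$ below $i_U(\kappa)$ from $V^*$. First I will record the basic closure and chain-condition properties in $N^*$: for $i = 1, 2, 3$ the forcing $\mathbb{R}_U^{\Add,i}$ is $\kappa^{+i}$-closed and $\kappa^{+(i+1)}$-cc (via $GCH$ and the standard $\Delta$-system argument); $\mathbb{R}_U^{\Add,4}$ is $\kappa^{+4}$-closed and a product of forcings that are $\kappa^{+5}$-, $\kappa^{+6}$- and $\kappa^{+7}$-cc in $N^*$; and the L\'evy collapse $\mathbb{R}_U^{\Col}$ is $\kappa^{+6}$-closed with $|\mathbb{R}_U^{\Col}|^{N^*} = i_U(\kappa)$, hence $i_U(\kappa)^+$-cc since $i_U(\kappa)$ is inaccessible in $N^*$.

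For part $(a)$, my approach is iterated use of Easton's Lemma. For each $j \in \{1, \ldots, 6\}$, split $\mathbb{R}_U$ as the product of its $\kappa^{+j}$-cc part and its $\kappa^{+j}$-closed part; Easton then guarantees that neither factor collapses $\kappa^{+j}$, so no cardinal in $[\kappa^+, \kappa^{+6}]$ is disturbed. The collapse $\mathbb{R}_U^{\Col}$ kills every $N^*$-cardinal in $(\kappa^{+6}, i_U(\kappa)]$ in the standard way for a L\'evy collapse, while cardinals $\geq i_U(\kappa)^+$ are preserved by combining the chain conditions of the individual factors to obtain the $i_U(\kappa)^+$-cc of the full product.

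For part $(b)$, I plan explicit lower bounds from the Cohen generics paired with nice-name upper bounds. For $i = 1, 2, 3$, the factor $\mathbb{R}_U^{\Add,i}$ adds $\kappa^{+(i+3)}$ subsets of $\kappa^{+i}$; the $\kappa^{+(i+1)}$-cc together with $GCH$ in $N^*$ and a count of nice names caps $2^{\kappa^{+i}}$ at $\kappa^{+(i+3)}$, while the remaining factors, being $\kappa^{+(i+1)}$-closed, contribute no new subsets of $\kappa^{+i}$ (again via Easton). The same scheme applied to $\mathbb{R}_U^{\Add,4}$, combined with the fact that the L\'evy collapse turns $(i_U(\kappa)^+)^{N^*}$, $(i_U(\kappa)^{++})^{N^*}$ and $(i_U(\kappa)^{+3})^{N^*}$ into $\kappa^{+7}, \kappa^{+8}, \kappa^{+9}$ of $N^*[I_U]$, yields $2^{\kappa^{+4}}, 2^{\kappa^{+5}}, 2^{\kappa^{+6}}$ as stated.

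The subtle point requiring care is reconciling the cardinals $(i_U(\kappa)^{++})^{N^{*2}}$ and $(i_U(\kappa)^{+3})^{N^{*2}}$ used in the definition of $\mathbb{R}_U^{\Add,4}$ with the values $i_U(\kappa)^{++}$ and $i_U(\kappa)^{+3}$ appearing in the conclusion. The analogue for $U$ of the equality in Remark 3.4, which rests on $V^{V^*}_{\kappa+3}$ being shared between $N^*$ and $N^{*2}$, identifies these pairs; once this identification is in hand, the lower and upper nice-name bounds match up exactly and the lemma follows.
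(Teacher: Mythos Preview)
The paper states this lemma without proof, leaving it as a routine fact about the Cohen and L\'evy forcings in the product. Your outline is the standard argument and is correct through the first three paragraphs.

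The final paragraph, however, contains an error. The analogue of Remark~3.2 (not~3.4) for $U$ does \emph{not} hold: $U$ is only a normal measure, so $N^{*2}$ agrees with $N^*$ only up through $\mathcal{P}(i_U(\kappa))$, and in general $(i_U(\kappa)^{++})^{N^{*2}} < (i_U(\kappa)^{++})^{N^*}$ (by elementarity, this is just the reflection of the familiar fact that $(\kappa^{++})^{N^*} < \kappa^{++}$). Your stated justification---that $V^{V^*}_{\kappa+3}$ is shared between $N^*$ and $N^{*2}$---is also wrong, since the paper explicitly notes that $N^*$ catches $V^*$ only up to $V^*_{\kappa+1}$. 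This is precisely why the authors define $\mathbb{R}_U^{\Add,4}$ using the $N^{*2}$ successors, whereas for $\mathbb{R}_\tau^{\Add,4}$ and $\mathbb{R}_{\bar{E}}^{\Add,4}$ the $(\kappa+3)$-strength of the extenders erases the distinction (Remark~3.2). The expressions $i_U(\kappa)^{++}$ and $i_U(\kappa)^{+3}$ in the lemma should simply be read as the $N^{*2}$ values matching the definition of $\mathbb{R}_U^{\Add,4}$; with that reading no reconciliation is needed, and your nice-name count already delivers the conclusion.
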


{\bf Cardinal} {\bf structure} {\bf in} {\bf $M_{\tau}^{*}[I_{\tau}]$} and {\bf $M_{\bar{E}}^{*}[I_{\bar{E}}]$.} The following lifting says everything which we can possibly say.

\begin{align*}
\begin{diagram}
\node{}
    \node{}
        \node{M_{\bar{E}}^{*}[I_\Es]}
\\
    \node{N^{*}[I_U]}
         \arrow{e,b}{i^*_{U, \gt'}}
    \node{M_{\gt'}^{*}[I_{\gt'}]}
         \arrow{ne,t,3}{i^*_{\gt', \Es}}
         \arrow{e,b}{i^*_{\gt', \gt}}
    \node{M_{\tau}^{*}[I_\gt]}
        \arrow[1]{n,b}{i^*_{\gt, \Es}}
\end{diagram}
\end{align*}

\bigskip

The forcing notion $\mathbb{P}_{\bar{E}}$, due to Merimovich, which we define later, adds a club to $\gk$.
For each $\gn_1, \gn_2$ successive points in the club
the cardinal structure and power function in the range $[\gn_1^+, \gn_2^{+3}]$
of the generic extension
is analogous to the cardinal structure and power function in the range
$[\gk^+, j_\Es(\gk)^{+3}]$ of $M_{\bar{E}}^{*}[I_\Es]$.

{\bf Cardinal} {\bf structure} {\bf in} {\bf $N^{*}[I_{U}^{\Col}]$.} The following lemma gives us everything that we need about the model $N^{*}[I_{U}^{\Col}]$.
\begin{lemma}
$(a)$ In $N^{*}[I_{U}^{\Col}]$ there are no cardinals in $[\kappa^{+7}, i_{U}(\kappa)]$ and all other $N^{*}-$cardinals are preserved,

$(b)$ $GCH$ holds in $N^{*}[I_{U}^{\Col}].$
\end{lemma}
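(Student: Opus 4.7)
The plan is to treat $\mathbb{R}_{U}^{\Col}=\Col(\kappa^{+6},i_{U}(\kappa))_{N^{*}}$ as a standard L\'evy collapse inside $N^{*}$, then read off cardinal preservation and GCH by combining closure, chain condition, and a nice-name count, using the fact that $N^{*}\models GCH$ (being an ultrapower of $V^{*}$ by the normal measure $U$).

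First I would record the two structural properties of $\mathbb{R}_{U}^{\Col}$ as computed in $N^{*}$: it is $\kappa^{+6}$-closed, and since $N^{*}\models GCH$ it has cardinality $i_{U}(\kappa)$ and satisfies the $i_{U}(\kappa)^{+}$-c.c.\ (by the usual $\Delta$-system / nice-name argument for L\'evy collapses). For part $(a)$: closure gives that every $N^{*}$-cardinal $\leq\kappa^{+6}$ is preserved and in fact no new subsets of $\kappa^{+5}$ are added; the $i_{U}(\kappa)^{+}$-c.c.\ gives that every $N^{*}$-cardinal $\geq(i_{U}(\kappa)^{+})^{N^{*}}$ is preserved; and the generic surjection furnished by the collapse sends $\kappa^{+6}$ onto $i_{U}(\kappa)$, so every $N^{*}$-cardinal in $(\kappa^{+6},i_{U}(\kappa)]$, in particular everything in $[\kappa^{+7},i_{U}(\kappa)]$, is collapsed to $\kappa^{+6}$. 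Note that $(i_{U}(\kappa)^{+})^{N^{*}}$ becomes the successor of $\kappa^{+6}$ in $N^{*}[I_{U}^{\Col}]$, i.e.\ $\kappa^{+7}$ of the extension.

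For part $(b)$ I would split the verification into three ranges. Below $\kappa^{+6}$: closure adds no new subsets, so GCH is inherited from $N^{*}$. At $\kappa^{+6}$: a nice-name count using the $i_{U}(\kappa)^{+}$-c.c.\ bounds the number of $\mathbb{R}_{U}^{\Col}$-names for subsets of $\kappa^{+6}$ by $(i_{U}(\kappa)^{\kappa^{+6}})^{N^{*}}=i_{U}(\kappa)$ (by GCH in $N^{*}$), so $2^{\kappa^{+6}}\leq (i_{U}(\kappa)^{+})^{N^{*}}=\kappa^{+7}$ in the extension, and the reverse inequality is automatic. For $\lambda\geq(i_{U}(\kappa)^{+})^{N^{*}}$: the same nice-name count gives $2^{\lambda}\leq(i_{U}(\kappa)^{\lambda})^{N^{*}}=(\lambda^{\lambda})^{N^{*}}=\lambda^{+}$ in $N^{*}$, and both $\lambda$ and $\lambda^{+}$ are preserved by the c.c., yielding $2^{\lambda}=\lambda^{+}$ in $N^{*}[I_{U}^{\Col}]$.

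I do not anticipate any genuine obstacle: everything reduces to the two standard properties of the L\'evy collapse in a GCH model. The only mild point to be careful about is identifying $(i_{U}(\kappa)^{+})^{N^{*}}$ with $(\kappa^{+7})^{N^{*}[I_{U}^{\Col}]}$, which is immediate once one has collapsed the interval $(\kappa^{+6},i_{U}(\kappa)]$ and verified that no cardinals above or below are moved.
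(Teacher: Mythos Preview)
Your argument is correct and is exactly the standard one; the paper in fact states this lemma without proof, treating it as a routine fact about the L\'evy collapse in a $GCH$ model. One small arithmetic slip: the nice-name count for subsets of $\kappa^{+6}$ gives at most $(i_{U}(\kappa)^{+})^{\kappa^{+6}}=i_{U}(\kappa)^{+}$ names (not $i_{U}(\kappa)$), since the number of antichains in a poset of size $i_{U}(\kappa)$ is $2^{i_{U}(\kappa)}=i_{U}(\kappa)^{+}$ under $GCH$; but your stated conclusion $2^{\kappa^{+6}}\leq (i_{U}(\kappa)^{+})^{N^{*}}$ is the right one regardless.
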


{\bf Cardinal} {\bf structure} {\bf in} {\bf $M_{\tau}^{*}[I_{\tau}^{\Col}]$} and {\bf $M_{\bar{E}}^{*}[I_{\bar{E}}^{\Col}]$.} The following lifting says everything which we can possibly say.

\begin{align*}
\begin{diagram}
\node{}
    \node{}
        \node{M_{\bar{E}}^{*}[I_\Es^{Col}]}
\\
    \node{N^{*}[I_U^{Col}]}
         \arrow{e,b}{i^*_{U, \gt'}}
    \node{M_{\gt'}^{*}[I_{\gt'}^{Col}]}
         \arrow{ne,t,3}{i^*_{\gt', \Es}}
         \arrow{e,b}{i^*_{\gt', \gt}}
    \node{M_{\tau}^{*}[I_\gt^{Col}]}
        \arrow[1]{n,b}{i^*_{\gt, \Es}}
\end{diagram}
\end{align*}
The forcing notion  $\mathbb{R}_{\bar{E}_{\kappa}}$ which we define later, adds a club to $\gk$.
For each $\gn_1, \gn_2$ successive points in the club
the cardinal structure and power function in the range $[\gn_1^+, \gn_2^{+3}]$
of the generic extension
is the same as the cardinal structure and power function in the range
$[\gk^+, j_\Es(\gk)^{+3}]$ of $M_{\bar{E}}^{*}[I_{\bar{E}}^{Col}]$.

\section{Redefining extender Sequences}
As in [4], in the prepared model $V=V^*[G]$  we define a new extender sequence system $\bar{F}= \langle \bar{F}_{\alpha}: \alpha \in \dom(\bar{F})\rangle$ by:
\begin{itemize}
  \item $\dom(\bar{F})=\dom(\bar{E}),$ \item $\len(\bar{F})=\len(\bar{E})$ \item $\leq_{\bar{F}}=\leq_{\bar{E}},$ \item $F(0)=E(0),$ \item $I(\tau)=I_{\tau},$ \item $\forall 0< \tau < \len(\bar{F}), F(\tau)= \langle \langle F_{\alpha}(\tau): \alpha \in \dom(\bar{F}) \rangle, \langle \pi_{\beta, \alpha}: \beta, \alpha \in \dom(\bar{F}), \beta \geq_{\bar{F}} \alpha \rangle \rangle$  is such that
    \begin{center}
    $X \in F_{\alpha}(\tau) \Leftrightarrow \langle \alpha, F(0), I(0), ..., F(\tau^{'}), I(\tau^{'}), ...: \tau^{'}  < \tau \rangle \in j_{\bar{E}}(X),$
    \end{center}
and
\begin{center}
$\pi_{\beta, \alpha}(\langle \xi, d \rangle)= \langle \pi_{\beta, \alpha}(\xi), d \rangle, $
\end{center}
\item $\forall \alpha \in \dom(\bar{F}), \bar{F}_{\alpha}= \langle \alpha, F(\tau),I(\tau): \tau < \len(\bar{F})  \rangle.$
\end{itemize}
Also let $I(\bar{F})$ be the filter generated by $\bigcup_{\tau < \len(\bar{F})} i_{\tau, \bar{E}}^{''}I(\tau).$ Then $I(\bar{F})$ is $\mathbb{R}_{\bar{E}}-$generic over $M_{\bar{E}}.$ Let us write $I(\bar{F})=I^{\Add}(\bar{F}) \times I^{\Col}(\bar{F})$ corresponding to $\mathbb{R}_{\bar{E}}=\mathbb{R}_{\bar{E}}^{\Add} \times \mathbb{R}_{\bar{E}}^{\Col}.$

From now on we work with this new definition of extender sequence
system and use $\bar{E}$ to denote it.

\begin{definition}$(1)$ We write $T \in \bar{E}_{\alpha}$ iff $\forall \xi < \len(\bar{E}_{\alpha}), T \in E_{\alpha}(\xi),$

$(2)$ $T \backslash \bar{\nu} = T \backslash V_{\kappa^{0}(\bar{\nu})}^{*},$

$(3)$ $T \upharpoonright \bar{\nu}= T \cap V_{\kappa^{0}(\bar{\nu})}^{*}.$

\end{definition}

We now define two forcing notions $\mathbb{P}_{\bar{E}}$ and $\mathbb{R}_{\bar{E}_{\kappa}}.$

\section{Definition of the forcing notion $\mathbb{P}_{\bar{E}}$}
 This forcing notion, defined in the ground model $V=V^*[G]$, is essentially the forcing notion of [4]. We give it in detail for completeness and later use. First we define a forcing notion $\mathbb{P}_{\bar{E}}^{*}.$

\begin{definition}
A condition $p$ in $\mathbb{P}_{\bar{E}}^{*}$ is of the form
\begin{center}
$p = \{ \langle \bar{\gamma}, p^{\bar{\gamma}}\rangle: \bar{\gamma} \in s \} \cup \{\langle \bar{E}_{\alpha}, T, f, F \rangle  \}$
\end{center}
where
\begin{enumerate}
\item $s \in [\bar{E}]^{\leq \kappa}, \min\bar{E}= \bar{E}_{\kappa} \in s,$

 \item$p^{\Es_{\kappa}} \in V_{\kappa^{0}(\bar{E})}^{*}$ is an extender sequence such that $\kappa(p^{\bar{E}_{\kappa}})$ is inaccessible ( we allow $p^{\bar{E}_{\kappa}}= \emptyset$). Write $p^0$ for $p^{\bar{E}_{\kappa}}.$

\item $\forall \bar{\gamma} \in s \backslash \{ \min(s) \}, p^{\bar{\gamma}} \in [V_{\kappa^{0}(\bar{E})}^{*}] ^{< \omega}$ is a $^{0}$-increasing sequence of extender sequences and $\max\kappa(p^{\bar{\gamma}})$ is inaccessible,

\item $\forall \bar{\gamma} \in s, \kappa(p^{0}) \leq \max\kappa(p^{\bar{\gamma}})$,

\item $\forall \bar{\gamma} \in s, \bar{E}_{\alpha} \geq \bar{\gamma},$

\item $T \in \bar{E}_{\alpha},$

\item $\forall \bar{\nu} \in T, \mid \{ \bar{\gamma} \in s: \bar{\nu}$ is permitted to $p^{\bar{\gamma}} \} \mid \leq \kappa^{0}(\bar{\nu}),$

\item $\forall \bar{\beta}, \bar{\gamma} \in s, \forall \bar{\nu} \in T,$ if $\bar{\beta} \neq \bar{\gamma}$ and $\bar{\nu}$ is permitted to $p^{\bar{\beta}}, p^{\bar{\gamma}},$ then $\pi_{\bar{E}_{\alpha}, \bar{\beta}}(\bar{\nu}) \neq \pi_{\bar{E}_{\alpha}, \bar{\gamma}}(\bar{\nu}),$

\item $f$ is a function such that

$\hspace{.5cm}$ $(9.1)$ $\dom(f)= \{\bar{\nu} \in T: \len(\bar{\nu})=0 \},$

$\hspace{.5cm}$ $(9.2)$ $f(\nu_{1}) \in R(\kappa(p^{0}), \nu_{1}^{0}).$ If $p^{0}=\emptyset,$ then $f(\nu_{1})= \emptyset,$

\item $F$ is a function such that

$\hspace{.5cm}$ $(10.1)$ $\dom(F)= \{ \langle \bar{\nu_{1}}, \bar{\nu_{2}} \rangle \in T^{2}:\len(\bar{\nu_{1}})=\len(\bar{\nu_{2}})= \emptyset \},$

$\hspace{.5cm}$ $(10.2)$ $F(\nu_{1}, \nu_{2}) \in R(\nu_{1}^{0}, \nu_{2}^{0}),$

$\hspace{.5cm}$ $(10.3)$ $j_{\bar{E}}^{2}(F)(\alpha, j_{\bar{E}}(\alpha)) \in I(\bar{E})$.
\end{enumerate}
\end{definition}
We write $\mc(p), \supp(p), T^{p}, f^{p}$ and $F^{p}$ for $\bar{E}_{\alpha}, s, T, f$ and $F$ respectively.
\begin{definition}
For $p, q \in \mathbb{P}_{\bar{E}}^{*},$ we say $p$ is a Prikry extension of $q$ ($p \leq^{*} q$ or $p \leq^{0} q$) iff
\begin{enumerate}
\item $\supp(p) \supseteq \supp(q),$

\item  $\forall \bar{\gamma} \in \supp(q), p^{\bar{\gamma}}=q^{\bar{\gamma}},$

\item  $\mc(p) \geq_{\bar{E}} \mc(q),$

\item  $\mc(p) >_{\bar{E}} \mc(q) \Rightarrow \mc(q) \in \supp(p),$

\item  $\forall \bar{\gamma} \in \supp(p) \backslash \supp(q), \max\kappa^{0}(p^{\bar{\gamma}}) > \bigcup \bigcup j_{\bar{E}}(f^{q})(\kappa(\mc(q))),$

\item  $T^{p} \leq \pi_{\mc(p), \mc(q)}^{-1''}T^{q},$

\item  $\forall \bar{\gamma} \in \supp(q), \forall \bar{\nu} \in T^{p},$ if $\bar{\nu}$ is permitted to $p^{\bar{\gamma}},$ then
\begin{center}
$\pi_{\mc(p), \bar{\gamma}}(\bar{\nu})=\pi_{\mc(q), \bar{\gamma}}(\pi_{\mc(p), \mc(q)}(\bar{\nu})),$
\end{center}

\item  $\forall \nu_{1} \in \dom(f^{p}), f^{p}(\nu_{1}) \leq f^{q}\circ\pi_{\mc(p), \mc(q)}(\nu_{1}),$

\item  $\forall \langle \nu_{1}, \nu_{2} \rangle \in \dom(F^{p}), F^{p}(\nu_{1}, \nu_{2}) \leq F^{q}\circ \pi_{\mc(p), \mc(q)}(\nu_{1}, \nu_{2}).$
\end{enumerate}

\end{definition}

We are now ready to define the forcing notion $\mathbb{P}_{\bar{E}}.$
\begin{definition}
A condition $p$ in $\mathbb{P}_{\bar{E}}$ is of the form
\begin{center}
$p=p_{n} ^{\frown} ...^{\frown} p_{0} $
\end{center}
where
\begin{itemize}
\item $p_{0} \in \mathbb{P}_{\bar{E}}^{*}, \kappa^{0}(p_{0}^{0}) \geq \kappa^{0}(\bar{\mu}_{1}),$ \item $p_{1} \in \mathbb{P}_{\bar{\mu}_{1}}^{*}, \kappa^{0}(p_{1}^{0}) \geq \kappa^{0}(\bar{\mu}_{2}),$

$\vdots$

    \item $p_{n} \in \mathbb{P}_{\bar{\mu}_{n}}^{*}.$
\end{itemize}
and $\langle \bar{\mu}_{n}, ..., \bar{\mu}_{1}, \bar{E} \rangle$ is a $^{0}-$inceasing sequence of extender sequence systems, that is $\kappa^{0}(\bar{\mu}_{n}) < ... < \kappa^{0}(\bar{\mu}_{1}) < \kappa^{0}(\bar{E}).$
\end{definition}
\begin{definition}
For $p, q \in \mathbb{P}_{\bar{E}},$ we say $p$ is a Prikry extension of $q$ ($p \leq^{*} q$ or $p \leq^{0} q$) iff
\begin{center}
$p=p_{n} ^{\frown} ...^{\frown} p_{0} $

$q=q_{n} ^{\frown} ...^{\frown} q_{0} $
\end{center}
where
\begin{itemize}
\item $p_{0}, q_{0} \in \mathbb{P}_{\bar{E}}^{*}, p_{0} \leq^{*} q_{0},$  \item $p_{1}, q_{1} \in \mathbb{P}_{\bar{\mu}_{1}}^{*}, p_{1} \leq^{*} q_{1},$

   $\vdots$

     \item $p_{n}, q_{n} \in \mathbb{P}_{\bar{\mu}_{n}}^{*}, p_{n} \leq^{*} q_{n}.$
\end{itemize}
\end{definition}
Now let $p \in \mathbb{P}_{\bar{E}}$ and $\bar{\nu} \in T^{p}.$ We define $p_{\langle \bar{\nu} \rangle}$ a one element extension of $p$ by $\bar{\nu}.$
\begin{definition}
Let $p \in \mathbb{P}_{\bar{E}}, \bar{\nu} \in T^{p}$ and $\kappa^{0}(\bar{\nu}) > \bigcup \bigcup j_{\bar{E}}(f^{p, \Col})(\kappa(\mc(p)))$, where $f^{p, \Col}$ is the collapsing part of $f^{p}$. Then $p_{\langle \bar{\nu}\rangle}=p_{1} ^{\frown} p_{0}$ where
\begin{enumerate}
\item $\supp(p_{0})=\supp(p),$

\item $\forall \bar{\gamma} \in \supp(p_{0}),$
$p_{0}^{\bar{\gamma}} = \left\{
\begin{array}{l}
      \pi_{\mc(p), \bar{\gamma}}(\bar{\nu}) \hspace{1.65cm} \text{ if } \bar{\nu} \text{ is permitted to } p^{\bar{\gamma}} \text{ and } \len(\bar{\nu}) >0, \\
       \pi_{\mc(p), \bar{\gamma}}(\bar{\nu}) \hspace{1.65cm} \text{ if } \bar{\nu} \text{ is permitted to } p^{\bar{\gamma}}, \len(\bar{\nu})=0 \text{ and } \bar{\gamma}=\bar{E}_{\kappa},
       \\
       p^{\bar{\gamma} \frown} \langle \pi_{\mc(p), \bar{\gamma}}(\bar{\nu}) \rangle \hspace{.7cm} \text{ if } \bar{\nu} \text{ is permitted to } p^{\bar{\gamma}}, \len(\bar{\nu})=0 \text{ and } \bar{\gamma}\neq \bar{E}_{\kappa},
       \\
       p^{\bar{\gamma}} \hspace{2.95cm} \text{ otherwise }.

     \end{array} \right.$

\item $\mc(p_{0})=\mc(p),$

\item $T^{p_{0}}=T^{p} \backslash \bar{\nu},$

\item $\forall \nu_{1} \in T^{p_{0}}, f^{p_{0}}(\nu_{1})=F^{p}(\kappa(\bar{\nu}), \nu_{1}),$

\item $F^{p_{0}}=F^{p},$

\item if $\len(\bar{\nu})>0$ then

$\hspace{.5cm}$ $(7.1)$ $\supp(p_{1})=\{\pi_{\mc(p), \bar{\gamma}}(\bar{\nu}): \bar{\gamma} \in \supp(p)$ and $\bar{\nu}$ is permitted to $p^{\bar{\gamma}}\},$

$\hspace{.5cm}$ $(7.2)$ $p_{1}^{\pi_{\mc(p), \bar{\gamma}}(\bar{\nu})}=p^{\bar{\gamma}},$

$\hspace{.5cm}$ $(7.3)$ $\mc(p_{1})=\bar{\nu},$

$\hspace{.5cm}$ $(7.4)$ $T^{p_{1}}=T^{p} \upharpoonright \bar{\nu},$

$\hspace{.5cm}$ $(7.5)$ $f^{p_{1}}=f^{p} \upharpoonright \bar{\nu},$

$\hspace{.5cm}$ $(7.6)$ $F^{p_{1}}=F^{p} \upharpoonright \bar{\nu},$

\item if $\len(\bar{\nu})=0$ then

$\hspace{.5cm}$ $(8.1)$ $\supp{p_{1}} =\{ \pi_{\mc(p),0}(\bar{\nu}) \},$

$\hspace{.5cm}$ $(8.2)$ $p_{1}^{\pi_{\mc(p),0}(\bar{\nu})}=p^{\bar{E}_{\kappa}},$

$\hspace{.5cm}$ $(8.3)$ $\mc(p_{1})=\bar{\nu}^{0},$

$\hspace{.5cm}$ $(8.4)$ $T^{p_{1}}= \emptyset,$

$\hspace{.5cm}$ $(8.5)$ $f^{p_{1}}=f^{p}(\kappa(\bar{\nu})),$

$\hspace{.5cm}$ $(8.6)$ $F^{p_{1}}= \emptyset.$
\end{enumerate}
\end{definition}
We use $(p_{\langle \bar{\nu} \rangle})_{0}$ and $(p_{\langle \bar{\nu} \rangle})_{1}$ for $p_{0}$ and $p_{1}$ respectively. We also let $p_{\langle \bar{\nu_{1}}, \bar{\nu_{2}} \rangle }= (p_{\langle \bar{\nu}_{1}\rangle})_{1} ^{\frown} (p_{\langle \bar{\nu}_{1} \rangle})_{0 \langle \bar{\nu_{2}} \rangle}$ and so on.

The above definition is the key step in the definition of the forcing relation $\leq.$
\begin{definition}
For $p, q \in \mathbb{P}_{\bar{E}},$ we say $p$ is a $1-$point extension of $q$ ($p \leq^{1} q$) iff
\begin{center}
$p=p_{n+1} ^{\frown} ...^{\frown} p_{0} $

$q=q_{n} ^{\frown} ...^{\frown} q_{0} $
\end{center}
and there is $0 \leq k \leq n$ such that
\begin{itemize}
\item $\forall i < k, p_{i}, q_{i} \in \mathbb{P}_{\bar{\mu}_{i}}^{*}, p_{i} \leq^{*} q_{i},$ \item $\exists \bar{\nu} \in T^{q_{k}}, (p_{k+1}) ^{\frown}p_{k} \leq^{*} (q_{k})_{ \langle \bar{\nu} \rangle}$ \item $\forall i > k, p_{i+1}, q_{i} \in \mathbb{P}_{\bar{\mu}_{i}}^{*}, p_{i+1} \leq^{*} q_{i},$
\end{itemize}
where $\bar{\mu}_{0}=\bar{E}.$
\end{definition}
\begin{definition}
For $p, q \in \mathbb{P}_{\bar{E}},$ we say $p$ is an $n-$point extension of $q$ ($p \leq^{n} q$) iff there are $p^{n}, ..., p^{0}$ such that
\begin{center}
$p=p^{n} \leq^{1} ... \leq^{1} p^{0}=q.$
\end{center}
\end{definition}
\begin{definition}
For $p, q \in \mathbb{P}_{\bar{E}},$ we say $p$ is an extension of $q$ ($p \leq q$) iff there is some $n$ such that $p \leq^{n} q$.
\end{definition}
Suppose that $H$ is $\mathbb{P}_{\bar{E}}-$generic over $V=V^*[G]$. For $\alpha \in \dom(\bar{E})$ set
\begin{center}
$C_{H}^{\alpha} = \{ \max\kappa(p_{0}^{\bar{E}_{\alpha}}): p \in H \}.$
\end{center}
\begin{theorem}
$(a)$ $V[H]$ and $V$ have the same cardinals $\geq \kappa,$

$(b)$ $\kappa$ remains strongly inaccessible in $V[H]$

$(c)$ $C_{H}^{\alpha}$ is unbounded in $\kappa,$

$(d)$ $C_{H}^{\kappa}$ is a club in $\kappa,$

$(e)$ $\alpha \neq \beta \Rightarrow C_{H}^{\alpha} \neq C_{H}^{\beta},$

$(f)$ Let $\l=\min(C_{H}^{\kappa}),$ and let $K$ be $\Col(\omega, \l^{+})_{V^[H]}-$generic over $V[H].$ Then
\begin{center}
$\C$$ARD^{V[H][K]} \cap\kappa = (\lim(C_{H}^{\kappa}) \cup \{\mu^{+}, ..., \mu^{+6}: \mu \in C_{H}^{\kappa}\} \backslash \l^{++}) \cup \{ \omega \},$
\end{center}

$(g)$ $V[H][K] \models  `` \forall \lambda \leq \kappa, 2^{\lambda}=\lambda^{+3}$''.
\end{theorem}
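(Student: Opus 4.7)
The plan is to run the standard ``Radin-style'' analysis adapted to the extender-based forcing $\mathbb{P}_{\bar{E}}$ with interleaved collapses, exactly as in Merimovich's papers [3], [4], making explicit the places where the guiding generics $I(\bar{E}) = I^{\Add}(\bar{E}) \times I^{\Col}(\bar{E})$ enter. The two workhorses will be: (i) a factorization of a condition $p = p_n^\frown \cdots {}^\frown p_0$ into a ``lower part'' of cardinality $< \kappa^0(p_0^0)$ and an ``upper part'' that is $(\kappa^0(p_0^0))^+$-closed under $\leq^*$, and (ii) the Prikry property, stating that for any sentence $\varphi$ of the forcing language and any $p \in \mathbb{P}_{\bar{E}}$ there is $q \leq^* p$ deciding $\varphi$. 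The Prikry property is proved (as in [4]) by a sequence of direct-extension shrinkings of the tree $T^p$ and of $f^p, F^p$, using that $\bigcap$ of $\le \kappa^0(\bar{\mu})$-many measure-one sets in $\bar{\mu}$ is measure-one, together with the fact that $R(\nu_1^0, \nu_2^0)$ is sufficiently closed.

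From the Prikry property and the factorization one reads off (a) and (b): no bounded subset of $\kappa^0(p_0^0)$ is added by the upper part (it is too closed), and the lower part lives in a small-rank initial segment; combined with the chain condition on the upper part (arising from the $(j_{\bar{E}}(\kappa)^+)^{M^*_{\bar{E}}}$-c.c. of the components of $\mathbb{R}_{\bar{E}}$ after lifting), this yields preservation of all cardinals $\ge \kappa$ and keeps $\kappa$ inaccessible. Parts (c), (d), (e) follow from genericity in the usual Radin manner: for (c) and (d), given any $\delta < \kappa$ the set of $p$ with $\max\kappa(p_0^{\bar{E}_\kappa}) > \delta$ (resp.\ with the corresponding coordinate of $p_0^{\bar{E}_\alpha}$ larger than $\delta$) is $\leq^*$-dense, and $C_H^\kappa$ is closed because one can always extend by a single $\bar{\nu} \in T^p$; for (e), use clause (8) in Definition~3.1 (the $\pi$'s separate distinct coordinates on a measure-one set) to show by genericity that $C_H^\alpha$ and $C_H^\beta$ differ.

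Parts (f) and (g) are where the guiding generics do the real work, and this is the main obstacle. Let $\langle \nu_\xi : \xi < \kappa \rangle$ enumerate $C_H^\kappa$ in increasing order, with $\lambda = \nu_0$. Analyzing the conditions $p_i \in \mathbb{P}^*_{\bar{\mu}_i}$ that make up a generic $H$, one sees that between any two successive $\nu_\xi < \nu_{\xi+1}$ the information recorded by $f^{p_i}$ and $F^{p_i}$, taken together with the $R(-,-)$ values read off at the chosen branch, yields a generic for $R(\nu_\xi^0, \nu_{\xi+1}^0)$ over the appropriate intermediate model. By the remark in the excerpt (\emph{cardinal structure in} $M_{\bar{E}}^*[I_{\bar{E}}]$), the cardinals and power function behaviour of this generic extension in the interval $[\nu_\xi^+, \nu_{\xi+1}^{+3}]$ faithfully mirror the behaviour of $\mathbb{R}_{\bar{E}}$ in $[\kappa^+, j_{\bar{E}}(\kappa)^{+3}]$ of $M_{\bar{E}}^*[I_{\bar{E}}]$. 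After performing the L\'evy collapse $K = \Col(\omega, \lambda^+)$ to eliminate the finitely many cardinals below $\lambda^{++}$, the surviving cardinals in $[\nu_\xi, \nu_{\xi+1}^{+6}]$ are precisely $\{\nu_\xi, \nu_\xi^+, \ldots, \nu_\xi^{+6}\}$ together with $\nu_{\xi+1}$, giving the formula in (f).

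Finally for (g), Lemma~3.15 (the power-function table in $N^*[I_U]$) and its analogues for $M_\tau^*[I_\tau]$ and $M_{\bar{E}}^*[I_{\bar{E}}]$, transferred via the interval mirroring described above, give $2^{\nu_\xi^{+k}} = \nu_\xi^{+(k+3)}$ for each $0 \le k \le 3$ at every $\xi$; for $\lambda$ in the ``collapsed gap'' $[\omega, \lambda^{++}]$ the identity $2^\lambda = \lambda^{+3}$ follows from the collapse together with GCH above $\lambda^+$ in the appropriate intermediate model. The main obstacle, as expected, is to verify carefully that the ``local'' $\mathbb{R}$-generic read off at a pair of successive club points is genuinely generic over the correct submodel, which is handled by combining Lemma~3.8(d) with a density argument using the Prikry property; this is the same strategy as in [4], and the modifications needed here are exactly those packaged into Section~3 of the present paper.
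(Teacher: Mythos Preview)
Your proposal is correct and follows the same approach as the paper: the paper's own proof consists of the single sentence ``Essentially the same as in [4],'' and your sketch is precisely a fleshed-out outline of that argument (factorization, Prikry property, chain condition, local genericity of the interleaved $R(\nu_\xi^0,\nu_{\xi+1}^0)$-filters via the guiding generics). A couple of cross-references are off (clause (8) is in Definition~5.1, not 3.1; the power-function lemma is 3.25, not 3.15), but the mathematical content matches.
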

\begin{proof}
Essentially the same as in [4].
\end{proof}

\section{Definition of the forcing notion $\mathbb{R}_{\bar{E}_{\kappa}}$}
 We now define another forcing notion $\mathbb{R}_{\bar{E_{\kappa}}}.$ It is essentially the Radin forcing corresponding to $\bar{E}_{\kappa}$ with interleaving collapses (see also [3]).

\begin{definition}
A condition in $\mathbb{R}_{\bar{E}_{\kappa}}$ is of the form
\begin{center}
$p= \langle \langle \bar{\gamma}_{n}, s^{n}, S^{n}, f^{n}, F^{n}\rangle, ..., \langle \bar{\gamma}_{0}, s^{0}, S^{0}, f^{0}, F^{0} \rangle \rangle$
\end{center}
where
\begin{enumerate}
\item $\bar{\gamma}_{n}, ..., \bar{\gamma}_{0}$ are minimal extender sequences \footnote{An extender sequence $\bar{\gamma}$ is minimal, if it has length $1$ and $\kappa(\bar{\gamma})=\kappa^0(\bar{\gamma})$},

\item $\bar{\gamma}_{0}=\bar{E}_{\kappa},$

\item $\forall i \leq n-1, \kappa(\bar{\gamma}_{i+1}) < \kappa^{0}(\bar{\gamma}_{i}),$

\item $\forall i \leq n, S^{i} \in \bar{\gamma}_{i},$

\item $\forall i \leq n, s^{i} \in V_{\kappa^{0}(\bar{\gamma}_{i})}$ is a minimal extender sequence such that $\kappa(s^{i})$ is inaccessible,

\item $\forall i \leq n, f^{i}$ is a function such that

$\hspace{.5cm}$ $(6.1)$ $\dom(f^{i})=\{\bar{\nu} \in S^{i}: \len(\bar{\nu})=0 \},$

$\hspace{.5cm}$ $(6.2)$ $f^{i}(\nu_{1}) \in R^{\Col}(\kappa(s^{i}), \nu_{1}^{0}),$

\item $\forall i \leq n, F^{i}$ is a function such that

$\hspace{.5cm}$ $(7.1)$ $\dom(F^{i})=\{ \langle \bar{\nu_{1}}, \bar{\nu_{2}} \rangle \in (S^{i})^{2}: \len(\bar{\nu}_{1})=\len(\bar{\nu}_{2})=0 \},$

$\hspace{.5cm}$ $(7.2)$ $F^{i}( \langle \nu_{1}, \nu_{2} \rangle) \in R^{\Col}(\nu_{1}^{0}, \nu_{2}^{0}),$

$\hspace{.5cm}$ $(7.3)$ $j_{\bar{E}}^{2}(F^{i})(\kappa(\bar{\gamma}_{i}), j_{\bar{E}}(\kappa(\bar{\gamma}_{i})) \in I^{\Col}(\bar{E}).$
\end{enumerate}
\end{definition}
\begin{definition}
For $p, q \in \mathbb{R}_{\bar{E}_{\kappa}}$ we say $p$ is a Prikry extension of $q$ ($p \leq^{*} q$ or $p \leq^{0} q$) iff $p$ and $q$ are of the form
\begin{center}
$p= \langle \langle \bar{\gamma}_{n}, s^{n}, S^{n}, f^{n}, F^{n}\rangle, ..., \langle \bar{\gamma}_{0}, s^{0}, S^{0}, f^{0}, F^{0} \rangle \rangle$

$q= \langle \langle \bar{\gamma}_{n}, t^{n}, T^{n}, g^{n}, G^{n}\rangle, ..., \langle \bar{\gamma}_{0}, t^{0}, T^{0}, g^{0}, G^{0} \rangle \rangle$
\end{center}
where $\forall i \leq n$
\begin{enumerate}
\item $s^{i}=t^{i},$

\item $S^{i} \subseteq T^{i},$

\item $f^{i} \leq g^{i},$

\item $F^{i} \leq G^{i}.$
\end{enumerate}
\end{definition}
\begin{definition}
Let $p= \langle \langle \bar{\gamma}_{n}, s^{n}, S^{n}, f^{n}, F^{n}\rangle, ..., \langle \bar{\gamma}_{0}, s^{0}, S^{0}, f^{0}, F^{0} \rangle \rangle \in \mathbb{R}_{\bar{E}_{\kappa}},$ and let $ \langle \bar{\nu} \rangle \in S^{i}, \kappa^{0}(\bar{\nu}) > \bigcup \bigcup j_{\bar{E}}(f^{i})(\kappa(\bar{\gamma}_{i})).$ We define $p_{\langle \bar{\nu} \rangle}$ as follows
\begin{itemize}
\item if $\len(\bar{\nu})> 0,$ then

$p_{\langle \bar{\nu} \rangle}= \langle \langle \bar{\gamma}_{n}, s^{n}, S^{n}, f^{n}, F^{n}\rangle, ...,$

$\hspace{2cm}$  $\langle \bar{\gamma}_{i+1}, s^{i+1}, S^{i+1}, f^{i+1}, F^{i+1}\rangle,$

$\hspace{3cm}$  $\langle \bar{\nu}, s^{i}, S^{i} \upharpoonright\bar{\nu}, f^{i} \upharpoonright \bar{\nu}, F^{i} \upharpoonright \bar{\nu}\rangle,$

 $\hspace{4cm}$ $\langle \bar{\gamma}_{i}, \bar{\nu}, S^{i} \backslash \bar{\nu}, F^{i}(\kappa(\bar{\nu},-)), F^{i}\rangle,$

 $\hspace{5cm}$  $\langle \bar{\gamma}_{i-1}, s^{i-1}, S^{i-1}, f^{i-1}, F^{i-1}\rangle, ...,$

 $\hspace{6cm}$ $\langle \bar{\gamma}_{0}, s^{0}, S^{0}, f^{0}, F^{0}\rangle \rangle $
 \item if $\len(\bar{\nu})= 0,$ then

$p_{\langle \bar{\nu} \rangle}= \langle \langle \bar{\gamma}_{n}, s^{n}, S^{n}, f^{n}, F^{n}\rangle, ...,$

$\hspace{2cm}$  $\langle \bar{\gamma}_{i+1}, s^{i+1}, S^{i+1}, f^{i+1}, F^{i+1}\rangle,$

$\hspace{3cm}$  $\langle \bar{\nu}^{0}, s^{i}, \emptyset, f^{i}(\kappa(\bar{\nu})), \emptyset \rangle,$

 $\hspace{4cm}$ $\langle \bar{\gamma}_{i}, \bar{\nu}, S^{i} \backslash \bar{\nu}, F^{i}(\kappa(\bar{\nu},-)), F^{i}\rangle,$

 $\hspace{5cm}$  $\langle \bar{\gamma}_{i-1}, s^{i-1}, S^{i-1}, f^{i-1}, F^{i-1}\rangle, ...,$

 $\hspace{6cm}$ $\langle \bar{\gamma}_{0}, s^{0}, S^{0}, f^{0}, F^{0}\rangle \rangle $
\end{itemize}
\end{definition}
\begin{definition}
Let $p, q \in \mathbb{R}_{\bar{E}_{\kappa}},$ where $q= \langle \langle \bar{\gamma}_{n}, s^{n}, S^{n}, f^{n}, F^{n}\rangle, ..., \langle \bar{\gamma}_{0}, s^{0}, S^{0}, f^{0}, F^{0} \rangle \rangle $. We say $p$ is a $1-$point extension of $q$ ($p \leq^{1} q$) iff there are $i$ and $\langle \bar{\nu} \rangle \in S^{i}$ such that $p \leq^{*} q_{\langle \bar{\nu} \rangle}.$
\end{definition}
\begin{definition}
Let $p, q \in \mathbb{R}_{\bar{E}_{\kappa}}.$  We say $p$ is an $n-$point extension of $q$ ($p \leq^{n} q$) iff there are $p^{n}, ..., p^{0}$ such that
\begin{center}
$p=p^{n} \leq^{1} ... \leq^{1} p^{0}=q.$
\end{center}
\end{definition}
\begin{definition}
Let $p, q \in \mathbb{R}_{\bar{E}_{\kappa}}.$  We say $p$ is an extension of $q$ ($p \leq q$) iff there is  $n$ such that $p \leq^{n} q$.
\end{definition}

Suppose $G$ is $\mathbb{R}_{\bar{E}_{\kappa}}-$generic over $V$. Set
\begin{center}
$C=\{\kappa(s^{0}): s^{0}$ appears in in some $p \in G \}.$
\end{center}
\begin{theorem}
$(a)$ $V[G]$ and $V$ have the same cardinals $\geq \kappa,$

$(b)$ $\kappa$ remains strongly inaccessible in $V[G],$

$(c)$ $C$ is a club in $\kappa,$

$(d)$ Let $\l=\min(C)$ and let $K$ be $\Col(\omega, \l^{+})_{V[G]}-$generic over $V[G].$ Then
\begin{center}
$\C$$ARD^{V[G][K]} \cap\kappa = (\lim(C) \cup \{\gamma^{+}, ..., \gamma^{+6}: \gamma \in C \} \backslash \l^{++}) \cup \{ \omega \},$
\end{center}
$(e)$ $V[G][K] \models `` GCH$''.
\end{theorem}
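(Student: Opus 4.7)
The plan is to follow the now-standard framework for Radin forcing with interleaved collapses developed in Merimovich's papers [3], [4], which was already invoked (without details) for the parallel Theorem 5.10 treating the extender-based version $\mathbb{P}_{\bar{E}}$. The two foundational ingredients are: a \emph{Prikry property} (pure extensions $\leq^*$ decide every sentence of the forcing language), and a \emph{factorization lemma} expressing $\mathbb{R}_{\bar{E}_{\kappa}}$ below a condition with stem component $\bar{\gamma}_i$ as a product of a ``low'' part of size $< \kappa^0(\bar{\gamma}_i)$ and a ``high'' part which is $\kappa^0(\bar{\gamma}_i)^+$-closed under $\leq^*$. The Prikry property will follow by a tree-of-refinements argument using normality of the measures in $\bar{E}_{\kappa}$ together with the diagonal/$\le$-closure of the $R^{\Col}$-components built into the conditions on $f^i$ and $F^i$; this is essentially the same proof as in [4], simplified by the absence of the $\mathrm{Add}$-components present in $\mathbb{R}_{\bar{E}}$.

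Granting these tools, parts (a) and (b) will follow by a standard argument: any bounded subset of $\kappa$ in $V[G]$ is added by the low part of the forcing below some point of $C$, which has size less than $\kappa$, so $\kappa$ cannot be collapsed and remains strongly inaccessible in $V[G]$, while no $V$-cardinal above $\kappa$ is touched because $|\mathbb{R}_{\bar{E}_\kappa}| = \kappa$. Part (c) is the classical Radin argument: density of conditions with arbitrarily large $\kappa(s^0)$ makes $C$ unbounded in $\kappa$, and a standard fusion-type argument with the tree structure of the $S^i$'s shows $C$ is closed.

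For (d) and (e), the key is the local analysis between successive points $\gamma_1 < \gamma_2$ of $C$. At such a pair, the interleaved collapse piece $F^i(\kappa(\bar{\nu}_1),\kappa(\bar{\nu}_2))$ realizes a $R^{\Col}(\gamma_1,\gamma_2)$-generic object over a suitable iterate model, projected from $I^{\Col}(\bar{E})$; by the construction of the guiding generics in Lemmas 3.8 and 3.17 together with the coherence of the $\bar{E}$-system, this collapse kills precisely the $V$-cardinals in $[\gamma_1^{+7}, \gamma_2]$ and forces GCH on $[\gamma_1^+, \gamma_2^{+6}]$. After further forcing with $\Col(\omega,\lambda^+)$ the initial segment up through $\lambda^{++}$ collapses to $\omega$, accounting for the set-subtraction in the statement. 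Assembling the intervals between successive points of $C$ and verifying that limit points of $C$ remain cardinals (they are limits of cardinals, hence cardinals in $V[G][K]$) yields the full cardinal description and GCH below $\kappa$.

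The main obstacle will be the preservation of GCH at limit points $\mu \in \lim(C)$: one must verify that no new subsets of $\mu$ arise from the ``tail'' of the forcing above $\mu$ after the collapses have acted. This is handled by combining the $\mu^+$-closure of the tail under $\leq^*$ with the Prikry property, reducing any purported new subset of $\mu$ to a function of the finite stem and therefore to the bounded part below $\mu$. The argument is standard in the Radin-with-collapses literature, but will need to be checked carefully against the specific critical-point computations for the embeddings derived from $\bar{E}$ (see the displayed chain $\kappa^{+4}_{M^*_{\tau'}} < j_{\tau'}(\kappa) < \dotsb$ in Section 3) to ensure the generic $I^{\Col}(\bar{E})$ coheres correctly under the various $i_{\tau',\tau}$.
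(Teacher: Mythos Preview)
Your proposal is correct and follows exactly the route the paper takes, namely deferring to the arguments of Merimovich [3], [4] for Radin forcing with interleaved collapses; indeed you have supplied considerably more detail than the paper's own one-line proof. One minor correction: the reason cardinals above $\kappa$ are preserved is not that $|\mathbb{R}_{\bar{E}_\kappa}| = \kappa$ (the measure-one sets $S^i$ alone give the forcing size $2^\kappa$), but rather that $\mathbb{R}_{\bar{E}_\kappa}$ satisfies the $\kappa^+$-c.c., since any two conditions with the same finite stem $\langle \bar{\gamma}_i, s^i : i \leq n\rangle$ are compatible.
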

\begin{proof}
Essentially the same as in [3] and [4].
\end{proof}

\section{Projection of $\mathbb{P}_{\bar{E}}$ into $\mathbb{R}_{\bar{E}_{\kappa}}$}
 We now define a projection
\begin{center}
$\pi: \mathbb{P}_{\bar{E}} \rightarrow \mathbb{R}_{\bar{E}_{\kappa}}$
\end{center}
as follows. 
Suppose $p=p_{n} ^{\frown} ...^{\frown} p_{0} $
where
\begin{itemize}
\item $p_{0} \in \mathbb{P}_{\bar{E}}^{*}, \kappa^{0}(p_{0}^{0}) \geq \kappa^{0}(\bar{\mu}_{1}),$ \item $p_{1} \in \mathbb{P}_{\bar{\mu}_{1}}^{*}, \kappa^{0}(p_{1}^{0}) \geq \kappa^{0}(\bar{\mu}_{2}),$

  $\vdots$

    \item $p_{n} \in \mathbb{P}_{\bar{\mu}_{n}}^{*}.$
\end{itemize}
and $\langle \bar{\mu}_{n}, ..., \bar{\mu}_{1}, \bar{\mu}_{0} \rangle$, where $\bar{\mu}_{0}= \bar{E},$ is a $^{0}-$inceasing sequence of extender sequence systems. For each $i \leq n$ set $f^{p_{i}}=f^{p_{i}, \Add} \times f^{p_{i}, \Col}$ and $F^{p_{i}}=F^{p_{i}, \Add} \times F^{p_{i}, \Col}$ which correspond  to $R=R^{\Add} \times R^{\Col}.$ Given $p$ as above, for each $i\leq n,$ we have
\begin{enumerate}
\item $j(f^{p_i, \Col})(\kappa(mc(p_i)))\in V_{\kappa^0(mc(p_i))}.$ Hence there is a function $g^{p_i}$ such that 
 \begin{center}
 $j(f^{p_i, \Col})(\kappa(mc(p_i)))=j(g^{p_i})(\kappa^0(mc(p_i))),$
\end{center}
\item $j_2(F^{p_i, \Col})(\kappa(mc(p_i)), j(\kappa(mc(p_i))))$ is in the generic filter constructed through the normal measure. Hence there is a stronger condition in the filter which is the image of a condition from the generic over the normal ultrapower. I.e there is a function $H^{p_i}$ such that 
    \begin{center}
    $j_2(H^{p_i})(\kappa^0(mc(p_i)), j(\kappa^0(mc(p_i)))) \leq j_2(F^{p_i, \Col})(\kappa(mc(p_i)), j(\kappa(mc(p_i)))),$ 
    \end{center}
 and   
     \begin{center}
    $j_2(H^{p_i})(\kappa^0(mc(p_i)), j(\kappa^0(mc(p_i))))\in I^{\Col}$ 
    \end{center}    
    and there is no weaker function $H'$ satisfying this.

\end{enumerate}
Let $(T^{p_{i}})^*$ be obtained from $T^{p_{i}}$ by replacing extender sequences in $T^{p_{i}}$ of length $0$ with 
\begin{center}
$\{ \nu \in T^{p_i}: \len(\nu)=0,  f^{p_i, \Col}(\nu)=g^{p_i}(\pi_{\mc(p_{i}),0}(\nu))   \}.$
\end{center}
It follows from $(1)$ that $(T^{p_{i}})^* \in mc(p_i).$ Now let $(T^{p_{i}})^{**}$ be obtained from $(T^{p_{i}})^{*}$  by restricting extender sequences in $(T^{p_{i}})^{*}$ of length $0$ to those $\nu_1 \in (T^{p_i})^*, \len(\nu_1)=0,$ such that
\begin{center}
$ \{\nu_2 \in (T^{p_i})^* : \len(\nu_2)=0, H^{p_i}(\pi_{\mc(p_{i}),0}(\nu_1), \pi_{\mc(p_{i}),0}(\nu_2)) \leq  F^{p_i, \Col}(\nu_1, \nu_2) \}$
\end{center}
has measure one with respect to the normal measure determined by $mc(p_i).$ Then by $(2)$ we have $(T^{p_{i}})^{**} \in mc(p_i).$
Let
\begin{center}
$\pi(p)= \langle  \langle \min\bar{\mu}_{n}, p_{n}^{0}, A^{p_n}, g^{p_{n}}, H^{p_{n}} \rangle, ..., \langle \bar{E}_{\kappa}, p_{0}^{0}, A^{p_0}, g^{p_{0}}, H^{p_{0}} \rangle \rangle,$
\end{center}
where $A^{p_i} =\{\pi_{\mc(p_{i}),0}(\bar{\nu}): \bar{\nu} \in (T^{p_{i}})^{**} \}$.
Let us note that $\pi(p) \in \mathbb{R}_{\bar{E}_{\kappa}}$ and $\pi$ is well-defined.

\begin{lemma}
$\pi$ is a projection, i.e

$(a)$ $\pi(1_{\mathbb{P}_{\bar{E}}})=1_{\mathbb{R}_{\bar{E}_{\kappa}}},$

$(b)$ $\pi$ is order preserving,

$(c)$ if $p \in \mathbb{P}_{\bar{E}}, q \in \mathbb{R}_{\bar{E}_{\kappa}}$ and $q \leq \pi(p)$ then there is $r \leq p$ in $\mathbb{P}_{\bar{E}}$ such that $\pi(r) \leq q.$
\end{lemma}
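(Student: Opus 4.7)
Part $(a)$ is immediate: the trivial condition $1_{\mathbb{P}_{\bar E}}$ has a single block whose mc is $\bar E_\kappa$ (so $\pi_{\mc(p),0}=\id$), whose stem $p^0$ is empty, whose tree is all of $\bar E$, and whose $f,F$ are the trivial conditions of $\mathbb{R}^{\Col}$; applying $\pi$ therefore yields exactly $1_{\mathbb{R}_{\bar E_\kappa}}$.

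For $(b)$, since $\leq$ is generated by $\leq^*$ and $\leq^1$, it suffices to verify preservation for these two cases. If $p\leq^{*} p'$, then on each block $p_i^0=p_i'^{\,0}$, $\mc(p_i)\geq_{\bar E}\mc(p_i')$, $T^{p_i}\subseteq\pi_{\mc(p_i),\mc(p_i')}^{-1''}T^{p_i'}$, and $f^{p_i},F^{p_i}$ strengthen $f^{p_i'},F^{p_i'}$ after composing with the projection. The functions $g^{p_i},H^{p_i}$ were characterised by pushing $f^{p_i,\Col},F^{p_i,\Col}$ down via the normal measure; since strengthening $f,F$ strengthens these pushdowns on a measure-one set, and since $\pi_{\mc(p_i),0}((T^{p_i})^{**})\subseteq \pi_{\mc(p_i'),0}((T^{p_i'})^{**})$, we obtain $\pi(p)\leq^{*}\pi(p')$. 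For $p\leq^{1} p'$ via some $\bar\nu\in T^{p_k'}$, the new block inserted in $p$ has stem $\bar\nu$ (or $\bar\nu^0$) and its projection under $\pi$ is the corresponding one-point extension of $\pi(p')$ at $\pi_{\mc(p_k'),0}(\bar\nu)\in A^{p_k'}$, followed by a Prikry strengthening of the remaining data; this is a $\leq^1$-extension in $\mathbb{R}_{\bar E_\kappa}$ by Definition of 1-point extensions there.

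For $(c)$, I argue by induction on the length $n$ of a chain $q=q^n\leq^{1}\dotsb\leq^{1}q^0=\pi(p)$. The base case $q\leq^{*}\pi(p)$ is a direct lifting: the shrinking $S^i\subseteq A^{p_i}$ lifts to $T^{r_i}=\pi_{\mc(p_i),0}^{-1''}S^i\cap (T^{p_i})^{**}$, which is in $\mc(p_i)$ by $(7)$--$(8)$ of the $\bar E_\kappa$-measure theory; the strengthenings $g^{p_i}\geq f^{i,q}$ and $H^{p_i}\geq F^{i,q}$ lift back through the characterisations (1),(2) in the definition of $\pi$ to strengthenings $f^{r_i,\Col}\leq f^{p_i,\Col}$ and $F^{r_i,\Col}\leq F^{p_i,\Col}$ on the same measure-one sets (the $\Add$-parts of $f,F$ are left untouched); we keep $f^{r_i,\Add}=f^{p_i,\Add}$, $F^{r_i,\Add}=F^{p_i,\Add}$, and the same stems and mc, yielding $r\leq^{*}p$ with $\pi(r)\leq q$. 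For the inductive step, suppose $q^{j+1}\leq^{1}q^j$ at $\langle\bar\nu\rangle\in S^{i,q^j}$, and by induction we have $r^{(j)}\leq p$ with $\pi(r^{(j)})\leq q^j$. Since $\bar\nu\in A^{(\text{block of }r^{(j)}_i)}=\pi_{\mc(r^{(j)}_i),0}((T^{r^{(j)}_i})^{**})$, choose $\bar\nu^{\prime}\in (T^{r^{(j)}_i})^{**}$ with $\pi_{\mc(r^{(j)}_i),0}(\bar\nu^{\prime})=\bar\nu$; the side condition $\kappa^0(\bar\nu)>\bigcup\bigcup j_{\bar E}(f^{i,q^j})(\kappa(\bar\gamma_i))$ translates, via the identity $j(f^{r^{(j)}_i,\Col})(\kappa(\mc(r^{(j)}_i)))=j(g^{r^{(j)}_i})(\kappa^0(\mc(r^{(j)}_i)))$, to the corresponding side condition that legitimates $r^{(j)}_{\langle\bar\nu^{\prime}\rangle}$. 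Taking $r^{(j+1)}=r^{(j)}_{\langle\bar\nu^{\prime}\rangle}$ (followed if necessary by a $\leq^{*}$-strengthening supplied by the base case applied to $q^{j+1}\leq^{*}(\pi(r^{(j)}))_{\langle\bar\nu\rangle}$), we obtain a $\leq^1$-extension of $r^{(j)}$ whose projection extends $q^{j+1}$, completing the induction.

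The principal obstacle is the one-point extension step in $(c)$: one must verify that both the permissibility condition $\kappa^0(\bar\nu^{\prime})>\bigcup\bigcup j_{\bar E}(f^{r^{(j)}_i,\Col})(\kappa(\mc(r^{(j)}_i)))$ and the compatibility of the interleaved collapsing/Adding data survive lifting under $\pi_{\mc,0}^{-1}$, and that the resulting tree $(T^{p_i})^{**}$ retains measure one after each strengthening. Both follow from the factorisation of $f^{\Col},F^{\Col}$ through the normal measure built into the very definition of $\pi$, together with the Prikry-style closure properties of $\mathbb{P}_{\bar E}^{*}$ that let us intersect $\kappa$-many measure-one sets.
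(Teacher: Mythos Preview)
Your overall strategy for all three parts is sound and aligns with the paper's, which simply declares (a) and (b) trivial and concentrates on (c). Your base case for (c)---lifting a $\leq^*$-extension by pulling back the tree under $\pi_{\mc,0}$ and strengthening the collapse data---is essentially the paper's argument.

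The inductive step, however, has a gap. You assert that $\bar\nu \in A^{(\text{block of } r^{(j)}_i)}$, but this is not justified: from $\pi(r^{(j)}) \leq q^j$ you only get the inclusion $A^{r^{(j)}_i} \subseteq S^{i,q^j}$ (the tree of the stronger condition is contained in that of the weaker), not the reverse. The element $\bar\nu$ witnessing $q^{j+1} \leq^1 q^j$ lies in $S^{i,q^j}$ but may lie outside the smaller set $A^{r^{(j)}_i}$, and then there is no preimage $\bar\nu'$ available in $(T^{r^{(j)}_i})^{**}$; your construction of $r^{(j+1)} = r^{(j)}_{\langle\bar\nu'\rangle}$ is simply undefined in that case. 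The subsequent remark ``$q^{j+1} \leq^* (\pi(r^{(j)}))_{\langle\bar\nu\rangle}$'' compounds the problem: even if $\bar\nu$ were in the tree, $q^{j+1}$ and $(\pi(r^{(j)}))_{\langle\bar\nu\rangle}$ are both below $(q^j)_{\langle\bar\nu\rangle}$ but there is no reason for one to extend the other.

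The paper avoids this by not threading through intermediate $r^{(j)}$'s at all. For $k=1$ one writes $q \leq^* (\pi(p))_{\langle\bar\nu\rangle}$ with $\bar\nu \in A^p$ directly---here there is no issue, since $A^p$ \emph{is} the tree of $\pi(p)$---then lifts $\bar\nu$ once to some $\bar\mu \in T^p$ with $\pi_{\mc(p),0}(\bar\mu)=\bar\nu$, forms $p_{\langle\bar\mu\rangle} = p_1{}^\frown p_0$, and performs the blockwise $\leq^*$-strengthening of $p_1,p_0$ simultaneously against the two blocks of $q$. For general $k$ one does the same with a sequence $\langle\bar\nu_1,\ldots,\bar\nu_k\rangle$: each $\bar\nu_i$ lives in the tree of a multi-point extension of $\pi(p)$, which is exactly the $\pi_{\mc,0}$-image of the corresponding tree in the multi-point extension of $p$, so the lifts $\bar\mu_i$ always exist. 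The single final $\leq^*$-strengthening then handles all of $q$ at once, and no intermediate tree-shrinking can obstruct the lifting.
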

\begin{proof}
Parts $(a)$ and $(b)$ are trivial; let us prove $(c)$. Let $p\in \mathbb{P}_{\bar{E}}$, $q \in \mathbb{R}_{\bar{E}_{\kappa}},$ and suppose that $q \leq \pi(p).$ Let us suppose for simplicity that $p\in \mathbb{P}^*_{\bar{E}}$\footnote{In fact the general case follows from this special case using the factorization properties of  $\mathbb{P}_{\bar{E}}$.}.  Let $\pi(p)=\langle  \langle \bar{E}_{\kappa}, p^{0}, A^p, g^{p}, H^{p} \rangle \rangle.$ Since $q \leq \pi(p)$, there is some $k$ such that  $q \leq^k \pi(p).$ We prove the lemma by induction on $k.$

First suppose that $k=0,$ so that $q$ is a Prikry extension of $\pi(p).$ Let $q= \langle \langle \bar{E}_{\kappa}, t, T, g, G\rangle \rangle.$ Then we have $t=p^{0}, T \subseteq A^p, g \leq g^{p}$ and $ G \leq H^{p}.$ Let $r\in \mathbb{P}^*_{\bar{E}}, r\leq^* p$ be obtained from $p$ with the following changes:
\begin{itemize}
\item $T^r \subseteq \{\bar{\nu} \in (T^p)^{**}: \pi_{mc(p),0}(\bar{\nu})\in T  \}, T^r \in mc(p),$
\item $f^{r, \Col} \leq f^{p, \Col}$ is such that for all $\nu \in T^r$ of length $0$
\begin{center}
$ g^r(\pi_{mc(p),0}(\nu))\leq g(\pi_{mc(p),0}(\nu)),$
\end{center}
\item $F^{r, \Col} \leq F^{p, \Col}$ is such that for all $\langle \nu_1, \nu_2 \rangle \in (T^r)^2$ with $\len(\nu_1)=\len(\nu_2)=0,$
\begin{center}
$ H^r(\pi_{mc(p),0}(\nu_1), \pi_{mc(p),0}(\nu_2))\leq G(\pi_{mc(p),0}(\nu_1), \pi_{mc(p),0}(\nu_2)).$
\end{center}
\end{itemize}
Then $\pi(r)=\langle  \langle \bar{E}_{\kappa}, p^{0}, A^r, g^{r}, H^{r} \rangle \rangle$, where $A^r   \subseteq T$ and for all $\nu, \nu_1$ and $\nu_2$ such that their image under $\pi_{\mc(p),0}$ is in $A^r$, we have
\begin{center}
$g^r(\pi_{mc(p),0}(\nu))\leq g(\pi_{mc(p),0}(\nu))$
\end{center}
and
\begin{center}
$H^r(\pi_{mc(p),0}(\nu_1), \pi_{\mc(p),0}(\nu_2))\leq G(\pi_{mc(p),0}(\nu_1), \pi_{\mc(p),0}(\nu_2))$.
\end{center}
It follows that $\pi(r)\leq^* q.$

Now suppose that $k=1$ (the general case $k\geq 1$ can be proved similarly). Let $\bar{\nu} \in A^p$ be such that $q \leq^* (\pi(p))_{\langle\bar{\nu}\rangle}.$
Also let $q= \langle \langle \bar{\gamma}_{1}, t^{1}, T^{1}, g^{1}, G^{1}\rangle, \langle \bar{\gamma}_{0}, t^{0}, T^{0}, g^{0}, G^{0} \rangle \rangle.$

Suppose for example that
 $l(\bar{\nu})>0$ (the case $l(\bar{\nu})=0$ can be proved similarly). Then $(\pi(p))_{\langle \bar{\nu}\rangle}= \langle \langle \bar{\nu}, p^0, A^p \upharpoonright \bar{\nu}, g^p \upharpoonright \bar{\nu}, H^p \upharpoonright \bar{\nu}  \rangle, \langle \bar{E}_{\kappa}, \bar{\nu}, A^p \setminus \bar{\nu}, H^p(\kappa(\bar{\nu},-)), H^p     \rangle \rangle.$ Let $\bar{\mu} \in T^p$ be an extender sequence system of size $(\kappa^0(\bar{\nu}))^{+3},$ obtained by the same elementary embedding generating $\bar{\nu},$ such that $\min \bar{\mu}=\bar{\nu}$ and let $(p)_{\langle \bar{\mu} \rangle}=p_1^{\frown} p_0.$ Let $r=r_1^{\frown} r_0\in \mathbb{P}_{\bar{E}}, r \leq^* (p)_{\langle \bar{\mu} \rangle}$ be such that for $i\in\{ 0,1\}$:
\begin{itemize}
\item $\supp(r_i)=\supp(p_i),$
\item $T^{r_i} \subseteq \{\bar{\nu} \in (T^{p_i})^{**}: \pi_{mc(p_i),0}(\bar{\nu})\in T^i  \}, T^{r_i} \in mc(p_i),$
\item $f^{r_i, \Col} \leq f^{p_i, \Col}$ is such that for all $\nu \in T^{r_i}$ with $\len(\nu)=0,$
\begin{center}
$ g^{r_i}(\pi_{mc(p_i),0}(\nu))\leq g^i(\pi_{mc(p_i),0}(\nu)),$
\end{center}
\item $F^{r_i, \Col} \leq F^{p, \Col}$ is such that for all $\langle \nu_1, \nu_2 \rangle \in (T^{r_i})^2$ with $\len(\nu_1)=\len(\nu_2)=0$,
\begin{center}
$ H^{r_i}(\pi_{mc(p_i),0}(\nu_1), \pi_{mc(p_i),0}(\nu_2))\leq G^i(\pi_{mc(p_i),0}(\nu_1), \pi_{mc(p_i),0}(\nu_2)).$
\end{center}
\end{itemize}
Then  $\pi(r)=\pi(r_1)^{\frown} \pi(r_0)$ and as above $ \pi(r_i) \leq^* \langle \langle \bar{\gamma}_{i}, t^{i}, T^{i}, g^{i}, G^{i}\rangle \rangle, i\in\{0,1\}.$ It follows that $\pi(r)\leq^* q.$

The lemma follows.
\end{proof}
\section{Completing the proof}
Finally in this section we complete the proof of Theorem 1.1. Let $H$ be $\mathbb{P}_{\bar{E}}-$generic over $V$ and let
$H_0=\langle \pi^{''}H  \rangle,$ the filter generated by $\pi^{''}H.$
Then $H_0$ is $\mathbb{R}_{\bar{E}_{\kappa}}-$generic over $V.$
 Consider the clubs
$C=\{\kappa(s^{0}): s^{0}$ appears in in some $p \in H_0 \}$ and
$C_{H}^{\kappa}=\{\kappa(p_{0}^{0}):p \in H \}.$ It is easily seen that $C=C_{H}^{\kappa}.$ Let $\l=\min(C).$ Note that the forcing notions $\mathbb{P}_{\bar{E}}$ and $\mathbb{R}_{\bar{E}_{\kappa}}$ add no new bounded subsets to $\l^{+},$ hence $\Col(\omega, \l^{+})_{V[H_0]}=\Col(\omega, \l^{+})_{V[H]},$ and hence if $K$ is $\Col(\omega, \l^{+})_{V[H]}-$generic over $V[H]$ then $K$ is $\Col(\omega, \l^{+})_{V[H_0]}-$generic over $V[H_0].$
Let
\begin{center}
$V_{1}=V_{\kappa}^{V[H_0][K]}$

$V_{2}=V_{\kappa}^{V[H][K]}$
\end{center}
It follows that $V_1$ and $V_2$ are models of $ZFC$. We show that the pair $(V_1, V_2)$ satisfies the requirements of the theorem.

$(a)$ $V_1$ and  $V_2$ have the same cardinals:
This is trivial, since

 $\hspace{2cm}$$\C$$ARD^{V_{1}} = (\lim(C) \cup \{\mu^{+}, ..., \mu^{+6}: \mu \in C \} \backslash \l^{++}) \cup \{ \omega \}$

 $\hspace{3.55cm}$$= (\lim(C_{H}^{\kappa}) \cup \{\mu^{+}, ..., \mu^{+6}: \mu \in C_{H}^{\kappa}\} \backslash \l^{++}) \cup \{ \omega \}.$

 $\hspace{3.55cm}$$=\C$$ARD^{V_2}.$

$(b)$ $V_1$ and  $V_2$ have the same cofinalities:
This is again trivial, since changing the cofinalities depends on the length of the extender sequence system used and not on its size.

$(c)$ $V_{1} \models `` GCH$'': by Theorem 6.7$(e)$.

$(d)$ $V_{2} \models `` \forall \lambda, 2^{\lambda}=\lambda^{+3}$': by Theorem 5.9$(g)$.

Theorem 1.1 follows.

\emph{Open question.} Is it possible to kill $GCH$ everywhere,
preserving cofinalities, adding just a single real? (Allowing
 cofinalities, but not cardinalities, to change, this was accomplished in [1]).

{Kurt G\"{o}del Research Center for Mathematical Logic, University of Vienna,

E-mail address: sdf@logic.univie.ac.at}

{School of Mathematics, Institute for Research in
Fundamental Sciences (IPM), P.O. Box: 19395-5746, Tehran-Iran.

E-mail address: golshani.m@gmail.com}

\end{document}